\def\acts{\mathrel{\reflectbox{$\righttoleftarrow$}}}
\def\QQ{{\mathbb Q}}
\def\CC{{\mathbb C}}
\def\ZZ{{\mathbb Z}}
\def\FF{{\mathbb F}}
\def\fp{{\mathfrak p}}
\def\cores{{\mathrm{cor}}}
\def\res{{\mathrm{res}}}
\title[Endomorphism Algebra of $\mathrm{GL}_2$ Abelian Varieties]{On the Endomorphism Algebra of Abelian Varieties Associated with Hilbert Modular Forms} 
\author{Alireza Shavali} 
\newtheorem{theo}{Theorem}
\newtheorem{prop}{Proposition}
\newtheorem{cor}{Corollary}
\newtheorem{lemma}{Lemma}
\begin{document} 
\maketitle

\begin{abstract} In this article, we will generalize an explicit formula proved by Quer for the Brauer class of the endomorphism algebra of abelian varieties associated to modular forms of weight 2 to the case of Hilbert modular forms of parallel weight 2, under the condition that the degree of the base field over $\QQ$ is an odd number.
\end{abstract}
\section{Introduction}
Let $K$ be a totally real number field such that $[K:\QQ]$ is odd 
and let $f$ be a non-CM (Hilbert) newform of parallel weight 2, level
$N$ where $N$ is an ideal of $O_K$ and finite central character $\epsilon$. It is well-known that in this case one can use Shimura curves to construct an abelian variety $A_f$ over $K$ associated with $f$. Let $T_{\fp}$ be the Hecke operator at $\fp$ and $a_{\fp}$ the eigenvalue of $T_{\fp}$ acting on $f$. Then $E=\QQ(\{a_\fp\}_\fp)$ is a number field called the Hecke field of $f$. The abelian variety $A_f/K$ is of dimension $d=[E:\QQ]$ and hence its $\ell$-adic Tate module (after tensoring with $\QQ$) $V_{\ell}$ is of dimension $2d$ over $\QQ_\ell$. One can define an $E$-structure on this Tate module by letting $a_{\fp}$ act via the Hecke operator at $\fp$. This turns $V_\ell$ into a rank 2 free module over $E \otimes_{\QQ} \QQ_\ell$ endowed with a continuous $G_K$-action. This is the Galois representation associated with $f$ which after a choice of basis can be written as:
$$
\rho_{f,\ell}:G_K \rightarrow \mathrm{Aut}_E(V_\ell) \simeq \mathrm{GL}_2 (E \otimes_{\QQ} \QQ_{\ell})
$$
It is easy to see that this Galois representation is unramified outside $\ell N$ and for any unramified prime ideal $\fp$,
the Eichler-Shimura relation implies that the characteristic polynomial of $\rho_{f,\ell}(\mathrm{Frob}_\fp)$ is equal to
$X^2 - a_{\fp}X + \epsilon(\fp)\mathrm{Nm}(\fp)$. In particular, $\{ \rho_{f,\ell}\}_\ell$ is a compatible family of Galois representations. 

In appendix B of \cite{nekovavr2012level}, Nekovář studies the image of the Galois representation associated with a Hilbert modular form (not necessarily of weight 2) and generalizes results of Ribet \cite{ribet1980twists} and Momose \cite{momose1981adic} to this case. He constructs a division algebra $D$ over a subfield $F$ of the Hecke field $E$ which describes the image up to $p$-adic openness. In the special case where one knows there is an abelian variety associated with $f$ (in particular $f$ is of parallel weight 2) $F$ is equal to the center of the algebra $X:=\mathrm{End}_{\overline{\QQ}}(A_f) \otimes_{\ZZ}\QQ$ \cite[B.4.11]{nekovavr2012level} and since $A_f$ is of $\mathrm{GL}_2$-type over $K$ and $f$ is non-CM, it is a Ribet-Pyle abelian variety, i.e.  $E\simeq\mathrm{End}_{K}(A_f) \otimes_{\ZZ}\QQ$ is a maximal subfield of the simple algebra $X$ \cite[Propposition 3.1]{guitart2012abelian}. Moreover, $D$ and $X$ have the same class in the  Brauer group of $F$ \cite[B.4.11]{nekovavr2012level} and $D$ is the Mumford-Tate group of $A_f$ . 
It is natural to ask, if one can find explicit formulas for this class in the Brauer group in terms of the Hecke eigenvalues of $f$. When $K=\QQ$, Quer was able to prove such a formula \cite{quer1998classe}. This was later generalized to higher weights in \cite{ghate2005brauer} for the endomorphism ring of the motive associated with the form. 

Quer's result is about classical modular form. In this paper, we will generalize his result to Hilbert modular forms of parallel weight 2 over any odd degree extension $K$ of $\QQ$ assuming that the central character $\epsilon$ is trivial. 
In section \ref{RibGen} we will generalize a theorem of Ribet \cite[Theorem 5.5]{ribet1992abelian} to our situation. This is the main arithmetic input in the proof of Quer's formula. Ribet's proof works without much change but we will repeat the arguments for the convenient of the reader and because this does not seem to be written down in the literature in this case.  In section \ref{twistAlg} we will generalize \cite[Theorem 5.6]{ribet1992abelian} using the work of Chi \cite{chi1987twists}. Here some of the Galois cohomology computations become more complicated due to the fact that our base field $K$ is not contained in the field $F$, whereas the case of classical modular forms. Therefore one needs to carefully go up and down between some base fields to be able to carry out the computations. Finally, in section \ref{BrClass} we are able to prove Quer's formula in our setting.

\textbf{Acknowledgments.} I would like to thank Gebhard Böckle, Andrea Conti and Judith Ludwig for many valuable discussions about this work. This research was supported by the Deutsche Forschungsgemeinschaft (DFG)
through the Collaborative Research Centre TRR 326 Geometry and Arithmetic of Uniformized Structures, project number 444845124.

\section{Endomorphism Ring and Galois representation} \label{RibGen}

The main goal of this section is to generalize \cite[Theorem 5.5.]{ribet1992abelian} to the case of Hilbert modular forms. Ribet uses Faltings' theorem on isogenies (Tate conjecture) to relate the endomorphism algebra $X$ to the Tate module. We will do the same thing and closely follow Ribet's arguments. We will keep the assumptions and the notations from the first paragraph of the introduction.

Choose a prime number $\ell$ that splits completely in $E$. Then one has $d$ different
embeddings $\sigma:E \rightarrow \QQ_{\ell}$.
Let $M$ be a finite Galois extension of $K$
such that all of the endomorphisms of
$A_f$ are defined over $M$. Now by Faltings' isogeny theorem 
one has
\begin{equation}\label{Faltings}
X \otimes_{\QQ}\QQ_{\ell}=
\mathrm{End}_{\QQ_{\ell}[G_M]}(V_{\ell})
\end{equation}

Remember that $V_\ell$ also carries an $E$-structure through the Hecke action. 
Every embedding $\sigma$ of $E$ into
$\QQ_{\ell}$ gives a $E \otimes \QQ_{\ell}$
-module structure on $\QQ_{\ell}$ with
respect to which we can define 
$$
V_{\sigma} = V_{\ell} \otimes_{E\otimes \QQ_{\ell},\sigma}\QQ_{\ell}
$$
which is a $\QQ_{\ell}$-subspace of $V_{\ell}$
of dimension 2 that is invariant under
the action of $G_K$. Now note that
$a \in E$ acts on $V_{\sigma}$ via 
multiplication by $\sigma(a) \in E$ hence
for two different embeddings $\sigma$ and
$\tau$, $V_{\sigma}$ and $V_{\tau}$ have
trivial intersection as subspaces of $V_{\ell}$. This (together with obvious 
dimension reason) gives a decomposition
$$
V_{\ell} = \bigoplus_{\sigma:E\hookrightarrow\QQ_\ell} V_{\sigma} 
$$
of $\QQ_{\ell}[G_K]$-modules. 
The following lemma will be useful later.

\begin{lemma}
    For each embedding $\sigma$
    one has
    $\mathrm{End}_{\QQ_{\ell}[G_M]}(V_{\sigma}) = \QQ_{\ell}$. In particular, $V_{\sigma}$ is
    absolutely
    irreducible as a $G_M$-representation.
\end{lemma}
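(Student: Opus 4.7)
The plan is to combine Faltings' isogeny theorem \eqref{Faltings} with the idempotent decomposition of $E \otimes_\QQ \QQ_\ell$: I will identify $\mathrm{End}_{\QQ_\ell[G_M]}(V_\sigma)$ with a corner of $X \otimes_\QQ \QQ_\ell$ and then show that this corner is $\QQ_\ell$ because the simple factor of $X \otimes_\QQ \QQ_\ell$ containing $e_\sigma$ is split over $\QQ_\ell$.

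First, since $G_M \subseteq G_K$, the decomposition $V_\ell = \bigoplus_\sigma V_\sigma$ is already a decomposition of $\QQ_\ell[G_M]$-modules, and the projection $V_\ell \twoheadrightarrow V_\sigma$ is realised by the primitive idempotent $e_\sigma \in E \otimes_\QQ \QQ_\ell \subset X \otimes_\QQ \QQ_\ell = \mathrm{End}_{\QQ_\ell[G_M]}(V_\ell)$. Extending any $G_M$-equivariant endomorphism of $V_\sigma$ by zero on the remaining summands (which is $G_M$-equivariant because each $V_\tau$ is $G_M$-stable) produces an element $\psi \in \mathrm{End}_{\QQ_\ell[G_M]}(V_\ell)$ satisfying $\psi = e_\sigma \psi e_\sigma$. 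This yields the identification
\[
\mathrm{End}_{\QQ_\ell[G_M]}(V_\sigma) \;\cong\; e_\sigma (X \otimes_\QQ \QQ_\ell) e_\sigma.
\]

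Since $\ell$ splits completely in $E$ it also splits completely in the subfield $F$, giving $X \otimes_\QQ \QQ_\ell = \prod_{\iota: F \hookrightarrow \QQ_\ell} X_\iota$ with $X_\iota := X \otimes_{F,\iota} \QQ_\ell$ a central simple $\QQ_\ell$-algebra of dimension $n^2$, where $n = [E:F]$. The idempotent $e_\sigma$ lies entirely in the factor indexed by $\iota = \sigma|_F$, and the inclusion $E \subset X$ induces an embedding $E_\iota := E \otimes_{F,\iota} \QQ_\ell \cong \QQ_\ell^n \hookrightarrow X_\iota$ of a split étale subalgebra of maximal $\QQ_\ell$-dimension. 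The main obstacle is to deduce from this that $X_\iota$ itself is split: writing $X_\iota \cong M_r(D)$ for a central division $\QQ_\ell$-algebra $D$ of dimension $s^2$ (so $n = rs$), the $n$ non-zero orthogonal idempotents of $E_\iota$ decompose the simple left $X_\iota$-module $D^r$ into $n$ non-zero $D$-subspaces; hence $n \leq \dim_D D^r = r$, which forces $s = 1$, so $D = \QQ_\ell$ and $X_\iota \cong M_n(\QQ_\ell)$. Under this identification $e_\sigma$ becomes a primitive diagonal idempotent, yielding $e_\sigma X_\iota e_\sigma = \QQ_\ell$ as required.

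For the ``in particular'' clause, $V_\sigma$ is $G_M$-semisimple, being a direct summand of the (Faltings-)semisimple module $V_\ell$. Combined with the flat base change identity
\[
\mathrm{End}_{\overline{\QQ_\ell}[G_M]}(V_\sigma \otimes_{\QQ_\ell} \overline{\QQ_\ell}) \;=\; \mathrm{End}_{\QQ_\ell[G_M]}(V_\sigma) \otimes_{\QQ_\ell} \overline{\QQ_\ell} \;=\; \overline{\QQ_\ell}
\]
and the structure theorem for semisimple modules over an algebraically closed field, this forces $V_\sigma \otimes_{\QQ_\ell} \overline{\QQ_\ell}$ to be simple.
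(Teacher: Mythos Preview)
Your argument is correct, but it takes a more hands-on route than the paper. The paper observes that since $E$ is a maximal subfield of the central simple $F$-algebra $X$, it is its own centralizer; tensoring with $\QQ_\ell$ and using Faltings' identification $X\otimes_\QQ\QQ_\ell=\mathrm{End}_{\QQ_\ell[G_M]}(V_\ell)$, the centralizer of $E\otimes_\QQ\QQ_\ell$ on the right-hand side is $\mathrm{End}_{E\otimes\QQ_\ell[G_M]}(V_\ell)=\bigoplus_\sigma\mathrm{End}_{\QQ_\ell[G_M]}(V_\sigma)$, which must therefore equal $E\otimes_\QQ\QQ_\ell=\bigoplus_\sigma\QQ_\ell$. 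This reaches the conclusion in one line, without invoking $F$ or the Wedderburn decomposition of $X\otimes_\QQ\QQ_\ell$. Your approach instead decomposes $X\otimes_\QQ\QQ_\ell$ along the center $F$, proves each factor $X_\iota$ is split by counting idempotents, and then reads off the corner $e_\sigma X_\iota e_\sigma$. This is longer but has the merit of making the splitting of each $X_\iota$ explicit; note, however, that it relies on knowing $F=Z(X)$ from the introduction (via Nekov\'a\v{r}), whereas the paper's proof of this lemma uses only that $E$ is a maximal subfield, keeping the argument independent of the later re-derivation of $F$ as the center.
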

\begin{proof}
From (\ref{Faltings}) one has
$
X \otimes_{\QQ}\QQ_{\ell}=
\mathrm{End}_{\QQ_{\ell}[G_M]}(V_{\ell})
$.
Since $E$ is a maximal subfield of $X$,
taking the centralizer of $E \otimes
\QQ_{\ell}$
of both sides one gets
$$
E \otimes_{\QQ}\QQ_{\ell}=
\mathrm{End}_{E\otimes \QQ_{\ell}[G_M]}(V_{\ell})
$$
which means
$$
\oplus_{\sigma}\QQ_{\ell}= \oplus_{\sigma}
\mathrm{End}_{\QQ_{\ell}[G_M]}(V_{\sigma})
$$
which implies the first part. Since $V_{\sigma}$
is semi-simple by Faltings' proof of the Tate conjecture, irreducibility follows immediately. 
\end{proof}

For every prime $\mathfrak{p}$ of $O_K$ not
dividing $\ell N$, recall that $\mathrm{Frob}_{\mathfrak{p}}$ action on
$V_{\ell}$ has characteristic polynomial 
$$
X^2-a_{\mathfrak{p}}X+\epsilon(\mathfrak{p})N(\mathfrak{p}) \in E[X]
$$
therefore, for every embedding $\sigma:E\rightarrow \QQ_{\ell}$
one has
$$
\mathrm{tr}(\mathrm{Frob}_{\mathfrak{p}}
\acts V_{\sigma}) = \sigma (a_{\mathfrak{p}})
\in \QQ_{\ell}
$$
Restricting the compatible family of 
Galois representation to 
$G_M$,
one gets another compatible family, namely
for every finite place $v$ of $M$ not 
dividing $\ell N$ there is
$t_v \in E$ such that 
$$
\mathrm{tr}(\mathrm{Frob}_{v}
\acts V_{\sigma}) = \sigma (t_v)
\in \QQ_{\ell}
$$
Let $\Sigma_{\ell N}$ be the set of finite
places of $M$ not dividing $\ell N$ and $L=\QQ (t_v:v\in \Sigma_{\ell N}) \subset E$.
Then one has the following:
\begin{lemma}\label{center_is_L}
    The center of the algebra
    $\mathrm{End}_{\QQ_{\ell}[G_M]}(V_{\ell})$
    is $L \otimes_{\QQ} \QQ_{\ell}$.
\end{lemma}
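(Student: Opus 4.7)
The plan is to describe $\mathrm{End}_{\QQ_\ell[G_M]}(V_\ell)$ explicitly by using the decomposition $V_\ell = \bigoplus_\sigma V_\sigma$ from just before the previous lemma, grouped according to which $V_\sigma$ are isomorphic as $G_M$-representations.

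First I would determine when two summands $V_\sigma$ and $V_\tau$ are isomorphic as $\QQ_\ell[G_M]$-modules. Each $V_\sigma$ is absolutely irreducible by the previous lemma and semisimple by Faltings, so its isomorphism class is determined by its character. By Chebotarev density, this character is determined by the values on the $\mathrm{Frob}_v$ for $v \in \Sigma_{\ell N}$, namely by the family $\{\sigma(t_v)\}_v$. Hence $V_\sigma \simeq V_\tau$ if and only if $\sigma(t_v) = \tau(t_v)$ for all $v$, which by the very definition of $L = \QQ(t_v : v \in \Sigma_{\ell N})$ is equivalent to $\sigma|_L = \tau|_L$.

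Next I would group the embeddings according to their restriction to $L$. For each $\tilde\sigma: L \hookrightarrow \QQ_\ell$, set $W_{\tilde\sigma} := \bigoplus_{\sigma|_L = \tilde\sigma} V_\sigma$. This is an isotypic $G_M$-representation whose irreducible summand is absolutely irreducible, so $\mathrm{End}_{\QQ_\ell[G_M]}(W_{\tilde\sigma}) \simeq M_{n_{\tilde\sigma}}(\QQ_\ell)$, where $n_{\tilde\sigma} = |\{\sigma : \sigma|_L = \tilde\sigma\}|$. Since different $W_{\tilde\sigma}$'s share no non-zero $G_M$-equivariant morphisms, I obtain
$$
\mathrm{End}_{\QQ_\ell[G_M]}(V_\ell) \simeq \prod_{\tilde\sigma: L \hookrightarrow \QQ_\ell} M_{n_{\tilde\sigma}}(\QQ_\ell),
$$
whose center is $\prod_{\tilde\sigma} \QQ_\ell$, a product indexed by the embeddings of $L$ into $\QQ_\ell$.

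Finally I would identify this center canonically with $L \otimes_\QQ \QQ_\ell$. Since $\ell$ splits completely in $E$ and $L \subset E$, it splits completely in $L$ as well, giving $L \otimes_\QQ \QQ_\ell \simeq \prod_{\tilde\sigma} \QQ_\ell$ via $a \otimes 1 \mapsto (\tilde\sigma(a))_{\tilde\sigma}$. The chain $L \hookrightarrow E \hookrightarrow X$ combined with Faltings places $L \otimes_\QQ \QQ_\ell$ inside $\mathrm{End}_{\QQ_\ell[G_M]}(V_\ell)$; since $a \in L$ acts on $V_\sigma$ as the scalar $\sigma(a)$, its image in the $\tilde\sigma$-block is multiplication by $\tilde\sigma(a)$, so it coincides with the center computed above. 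I do not foresee a serious obstacle; the only care needed is the Chebotarev argument with $\QQ_\ell$-coefficients, which is standard.
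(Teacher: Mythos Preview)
Your proposal is correct and follows essentially the same approach as the paper: both group the $V_\sigma$'s into isotypic $G_M$-components indexed by the embeddings $\tilde\sigma:L\hookrightarrow\QQ_\ell$, identify the endomorphism algebra as a product of matrix algebras over $\QQ_\ell$, and conclude that the center is $\prod_{\tilde\sigma}\QQ_\ell \simeq L\otimes_\QQ\QQ_\ell$. The only cosmetic differences are that the paper first observes the center must lie inside $E\otimes\QQ_\ell$ (since $E\otimes\QQ_\ell$ is its own centralizer), and that you invoke Chebotarev explicitly where the paper just says ``by semi-simplicity''.
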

\begin{proof}
    First note that by Faltings' theorem
    $$
    E \otimes \QQ_{\ell}=
    \mathrm{End}_{\QQ_{\ell}[G_K]}(V_{\ell})\subset
    \mathrm{End}_{\QQ_{\ell}[G_M]}(V_{\ell})
    $$
    and since $E \otimes \QQ_{\ell}$
    centralizes itself, it should contain
    the center of
    $\mathrm{End}_{\QQ_{\ell}[G_M]}(V_{\ell})$.

    By Semi-simplicity of $V_{\ell}$, one can
    see that $V_{\sigma}$ and $V_{\tau}$ are 
    isomorphic as $G_M$-representations if and only
    if they have the same $\mathrm{Frob}_v$ traces
    for all places $v$ of $M$ not dividing $\ell N$
    or equivalently $\sigma$ and $\tau$ agree
    on $L$. Now let
    $\gamma:L \rightarrow \QQ_{\ell}$ be an 
    embedding and define
    $$
    V_{\gamma}=\oplus_{\sigma|_{L}=\gamma}V_{\sigma}
    $$
    So one has the  decomposition $V=\oplus V_{\gamma}$
    and also since there is clearly no non-trivial
    endomorphism from one $V_{\gamma}$
    to another one also has the decomposition
    $$
    \mathrm{End}_{\QQ_\ell[G_M]}(V_{\ell})
    =\oplus_{\gamma}
    \mathrm{End}_{\QQ_\ell[G_M]}(V_{\gamma})
    $$
    Now let $a \in L$. Then $a$ acts on $V_{\sigma}$
    by $\sigma (a)$ hence it acts on the whole
    subspace $V_{\gamma}$ by the scalar $\gamma(a) \in \QQ_{\ell}$
    which means (because of the decomposition above)
    it's in the center of
    $\mathrm{End}_{\QQ_\ell[G_M]}(V_{\ell})$.
    So the $E$-algebra structure on 
    $\mathrm{End}_{\QQ_\ell[G_K]}(V_{\ell})$
    induces this $L$-algebra structure on 
    $Z(\mathrm{End}_{\QQ_\ell[G_M]}(V_{\ell}))$
    which means it's enough to prove
    $$
    Z(\mathrm{End}_{\QQ_\ell[G_M]}(V_{\ell}))
    \simeq
    L \otimes \QQ_{\ell}
    $$
    as $L$-algebras. This is easy to check:
    $$
    Z(\mathrm{End}_{\QQ_\ell[G_M]}(V_{\ell}))
    =Z(\oplus_{\gamma}
    \mathrm{End}_{\QQ_\ell[G_M]}(V_{\gamma}))
    =\oplus_{\gamma}Z(
    \mathrm{End}_{\QQ_\ell[G_M]}(V_{\gamma}))
    \simeq
    \oplus_{\gamma}\QQ_{\ell}
    =
    L \otimes \QQ_{\ell}
    $$
    
\end{proof}
\begin{cor}
    $L$ is the center of X, i.e. $L=F$.
\end{cor}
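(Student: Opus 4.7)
The plan is to combine Lemma \ref{center_is_L} with the Faltings identification (\ref{Faltings}) to compute the center of $X \otimes_{\QQ} \QQ_{\ell}$ in two different ways and then descend the resulting equality of $\QQ_{\ell}$-algebras back to $\QQ$.

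First, I would take centers on both sides of $X \otimes_{\QQ} \QQ_{\ell} = \mathrm{End}_{\QQ_{\ell}[G_M]}(V_{\ell})$. The left-hand side yields $F \otimes_{\QQ} \QQ_{\ell}$ since forming the center commutes with the flat base change $\QQ \to \QQ_{\ell}$ (the equations for commuting with a $\QQ$-basis of $X$ are $\QQ$-linear). The right-hand side is identified with $L \otimes_{\QQ} \QQ_{\ell}$ by Lemma \ref{center_is_L}. Hence, as subalgebras of $X \otimes_{\QQ} \QQ_{\ell}$, one has the equality $F \otimes_{\QQ} \QQ_{\ell} = L \otimes_{\QQ} \QQ_{\ell}$.

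Next, I would check that both $F$ and $L$ sit inside $E$. For $L$ this is its very definition $L = \QQ(t_v : v \in \Sigma_{\ell N}) \subset E$. For $F$, one uses that $X$ is a central simple $F$-algebra with $E$ as a maximal subfield: any maximal commutative subalgebra of a central simple algebra contains the center (otherwise one could enlarge it by adjoining the center), so $F \subset E$. Consequently, the equality above already takes place inside $E \otimes_{\QQ} \QQ_{\ell}$.

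Finally, to recover an equality of subfields of $E$, I would pick a $\QQ$-basis of $L$ and extend it to a $\QQ$-basis of $E$. Under the canonical injection $E \hookrightarrow E \otimes_{\QQ} \QQ_{\ell}$, $a \mapsto a \otimes 1$, uniqueness of coordinates in this basis shows $(L \otimes_{\QQ} \QQ_{\ell}) \cap E = L$ (a special case of faithful flatness of $\QQ \to \QQ_{\ell}$); the analogous statement holds for $F$. Therefore
\[
F = (F \otimes_{\QQ} \QQ_{\ell}) \cap E = (L \otimes_{\QQ} \QQ_{\ell}) \cap E = L.
\]
The substantive content of this corollary has already been packaged into Lemma \ref{center_is_L}; the only mildly delicate point that remains is to keep track of the fact that the $L$- and $F$-algebra structures on the two centers are compatible under Faltings' identification, which is implicit in the proof of the preceding lemma.
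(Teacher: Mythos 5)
Your proposal is correct and follows essentially the same route as the paper: take centers on both sides of the Faltings identification $X \otimes_{\QQ}\QQ_{\ell}=\mathrm{End}_{\QQ_{\ell}[G_M]}(V_{\ell})$ and invoke Lemma \ref{center_is_L} to get $F\otimes\QQ_{\ell}=L\otimes\QQ_{\ell}$. Your extra care in descending from this equality to $F=L$ (both fields sit inside $E$, and faithful flatness of $\QQ\to\QQ_{\ell}$ recovers them by intersecting with $E$) simply spells out the step the paper compresses into ``which implies $F=L$.''
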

\begin{proof}
    Recall that from Faltings' isogeny theorem we had
    $$
    X \otimes_{\QQ}\QQ_{\ell}=
\mathrm{End}_{\QQ_{\ell}[G_M]}(V_{\ell})
    $$ Now from the last lemma:
    $$
    L \otimes \QQ_{\ell}=
    Z(\mathrm{End}_{\QQ_{\ell}[G_M]}(V_{\ell}))
    = Z(X \otimes_{\QQ}\QQ_{\ell})
    = F \otimes \QQ_{\ell}
    $$
    which implies $F=L$.
\end{proof}
\begin{lemma}\label{simpleArg}
    If $\sigma, \tau:E \rightarrow \QQ_{\ell}$
    are embeddings that agree on $F$ then there
    exists a character $\phi:G_K \rightarrow \QQ_{\ell}^{\times}$
    such that $V_{\sigma}\simeq V_{\tau} \otimes \phi$
    as representation of $G_K$.
\end{lemma}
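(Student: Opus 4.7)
The plan is an inner-twist argument. First I would exploit that by the Corollary just proved, $F=L$, so the agreement $\sigma|_F=\tau|_F$ is equivalent to $\sigma(t_v)=\tau(t_v)$ for every $v\in\Sigma_{\ell N}$. In view of the formula $\mathrm{tr}(\mathrm{Frob}_v \acts V_\sigma)=\sigma(t_v)$ derived above, this says that the characters of the two $G_M$-representations $V_\sigma$ and $V_\tau$ agree on Frobenius elements at all but finitely many places. Chebotarev then forces the two semisimple representations to have equal characters, and since both $V_\sigma$ and $V_\tau$ are absolutely irreducible over $G_M$ by the first lemma of the section, this yields a $G_M$-equivariant isomorphism $\iota\colon V_\sigma \xrightarrow{\sim} V_\tau$, unique up to scalar thanks to $\mathrm{End}_{\QQ_\ell[G_M]}(V_\sigma)=\QQ_\ell$.

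Next I would leverage that $M/K$ is Galois, so $G_M$ is normal in $G_K$. For each $g\in G_K$ the map $g\star\iota\colon v\mapsto g\,\iota(g^{-1}v)$ is another $G_M$-equivariant homomorphism $V_\sigma\to V_\tau$, because conjugation by $g$ preserves $G_M$. By Schur's lemma it equals $\phi(g)\iota$ for a unique $\phi(g)\in\QQ_\ell^\times$, and the associativity $(g_1g_2)\star\iota=g_1\star(g_2\star\iota)$ forces $\phi\colon G_K\to\QQ_\ell^\times$ to be a group homomorphism. Unwinding the defining relation $g\,\iota(g^{-1}v)=\phi(g)\iota(v)$ shows that $\iota$ intertwines $V_\sigma$ with the twist $V_\tau\otimes\phi^{-1}$ as $G_K$-representations; replacing $\phi$ by $\phi^{-1}$ then produces the character required by the statement.

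The proof carries no real obstacle, being a standard Schur-plus-Chebotarev inner-twist argument; the only point to be attentive to is the normality of $G_M$ in $G_K$, which is exactly what guarantees that the conjugated map $g\star\iota$ is still $G_M$-equivariant and so allows $\phi$ to be a genuine character rather than merely a projective $1$-cocycle.
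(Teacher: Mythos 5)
Your proof is correct and is essentially the paper's argument: both obtain the $G_M$-isomorphism from equality of Frobenius traces (via $F=L$ and semisimplicity), and both use normality of $G_M$ in $G_K$ together with $\mathrm{End}_{\QQ_\ell[G_M]}(V_\tau)=\QQ_\ell$ to force the intertwiner discrepancy to be a scalar-valued character. The only difference is presentational — you work basis-free with the intertwiner $\iota$ and the conjugated map $g\star\iota$, while the paper fixes bases so that $\rho_\sigma$ and $\rho_\tau$ agree on $G_M$ and sets $\phi(g)=\rho_\sigma^{-1}(g)\rho_\tau(g)$.
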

\begin{proof}
    From the proof of lemma \ref{center_is_L} 
    we know that since $\sigma$ and $\tau$ 
    agree on $F=L$, $V_{\sigma}$ and $V_{\tau}$
    are isomorphic as representations of $G_M$. 
    So we can choose two bases for $V_{\sigma}$
    and $V_{\tau}$ such that the homomorphisms
    $\rho_{\sigma}:G_K \rightarrow GL_2(\QQ_{\ell})$
    and
    $\rho_{\tau}:G_K \rightarrow GL_2(\QQ_{\ell})$
    associated with $V_{\sigma}$ and $V_{\tau}$
    are equal on $G_M$. Now define
    $$
    \phi (g) := \rho_{\sigma}^{-1}(g)\rho_{\tau}(g)
    $$
    A priori $\phi$ is just a map
    $\phi:G_K \rightarrow GL_2(\QQ_{\ell})$ which
    is trivial on $G_M$. We want to prove that it
    is actually a homomorphism with values
    in the center (hence actually a character). 

    Let $g\in G_K$ and $h \in G_M$. 
    Note that
    $\rho_{\sigma} (h) = \rho_{\tau} (h)$
    and 
     $\rho_{\sigma} (ghg^{-1}) = \rho_{\tau} (ghg^{-1})$
     since $G_M$ is normal in $G_K$.
     Now the following computation shows that 
     $\phi (g)=\rho_{\sigma}^{-1}(g)\rho_{\tau}(g)$
     commutes with $\rho_{\tau} (h)$:
     $$
     \rho_{\sigma}^{-1}(g)\rho_{\tau}(g)\rho_{\tau} (h)
     = \rho_{\sigma}^{-1}(g)\rho_{\tau}(gh) =
     \rho_{\sigma}^{-1}(g)
     \rho_{\tau}(ghg^{-1})\rho_{\tau}(g)
     $$
     $$
     = \rho_{\sigma}(g^{-1})
     \rho_{\sigma}(ghg^{-1})\rho_{\tau}(g)
     = \rho_{\sigma} (h)\rho_{\sigma}^{-1}(g)\rho_{\tau}(g)
     = \rho_{\tau} (h)\rho_{\sigma}^{-1}(g)\rho_{\tau}(g)
     $$
     Now since 
     $\mathrm{End}_{\QQ_{\ell}[G_M]}(V_{\tau})=\QQ_{\ell}$
    we are done. 
\end{proof}
\begin{cor}
    Using the notation of the last lemma, $\phi^2=\frac{{}^\sigma \epsilon}{{}^\tau \epsilon}$
    and for any prime $\mathfrak{p}$ of $K$ of
    good reduction for $A_f$, one has
    $$
    {\sigma (a_{\mathfrak{p}})}=
    \phi(\mathrm{Frob}_{\mathfrak{p}})
    \tau (a_{\mathfrak{p}})
    $$
\end{cor}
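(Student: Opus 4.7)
The plan is that both identities should fall directly out of the previous lemma's isomorphism $V_\sigma \simeq V_\tau \otimes \phi$ of $G_K$-representations, by comparing traces (for the first) and determinants (for the second) of Frobenius. All the real work has already been done in constructing $\phi$; what remains is character-theoretic bookkeeping.

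For the trace identity, I would fix a prime $\mathfrak{p}$ of $K$ of good reduction for $A_f$ and not dividing $\ell$. Then $V_\sigma$ and $V_\tau$ are both unramified at $\mathfrak{p}$, so $\phi$ is also unramified at $\mathfrak{p}$ and $\phi(\mathrm{Frob}_\mathfrak{p})$ is well-defined. Taking traces on both sides of $V_\sigma \simeq V_\tau\otimes\phi$ and using $\mathrm{tr}(\mathrm{Frob}_\mathfrak{p}\acts V_\sigma)=\sigma(a_\mathfrak{p})$ computed earlier immediately yields $\sigma(a_\mathfrak{p}) = \phi(\mathrm{Frob}_\mathfrak{p}) \cdot \tau(a_\mathfrak{p})$.

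For the determinant identity, I would take determinants on both sides of the isomorphism to obtain the character relation $\det V_\sigma = \phi^2 \cdot \det V_\tau$. The Eichler--Shimura characteristic polynomial $X^2 - a_\mathfrak{p} X + \epsilon(\mathfrak{p}) N(\mathfrak{p})$, after applying $\sigma$, gives $\det(\mathrm{Frob}_\mathfrak{p}\acts V_\sigma) = {}^\sigma\epsilon(\mathfrak{p}) N(\mathfrak{p})$, and analogously for $\tau$. Dividing, one gets $\phi^2(\mathrm{Frob}_\mathfrak{p}) = {}^\sigma\epsilon(\mathfrak{p})/{}^\tau\epsilon(\mathfrak{p})$ for all primes $\mathfrak{p}$ of good reduction, a set of Chebotarev density one; hence $\phi^2 = {}^\sigma\epsilon/{}^\tau\epsilon$ as characters of $G_K$. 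The only mild subtlety — and the nearest thing to an obstacle — is the convention for viewing the twisted central character ${}^\sigma\epsilon$ as a $\QQ_\ell^\times$-valued character of $G_K$ via class field theory and the chosen embedding $\sigma:E\hookrightarrow\QQ_\ell$, so that the stated equality of characters is literally meaningful; this is standard.
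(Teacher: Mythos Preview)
Your proposal is correct and matches the paper's own proof essentially verbatim: the paper simply says to take determinants of $V_\sigma\simeq V_\tau\otimes\phi$ for the first identity and traces for the second. Your added remarks about Chebotarev density and the convention for ${}^\sigma\epsilon$ only make explicit what the paper leaves implicit.
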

\begin{proof}
    Note that 
    $V_{\sigma}\simeq V_{\tau} \otimes \phi$.
    Taking determinants of both sides one gets
    the first part and taking trace one gets the 
    second part.
\end{proof}

Ribet also proves that in the $K=\QQ$ case, the field $F$ is generated by $\{a_p^2/\epsilon(p)\}_{p\nmid N}$. This is also true in our case. In fact, by \cite[B.4.11]{nekovavr2012level} $F$ is exactly the field fixed by inner-twists and the above result is known in much more generality in this context by the results of \cite{conti2023big}.

\begin{prop}[\cite{conti2023big}, Corollary 4.12] \label{CLM}
    The field $F$ is generated over $\QQ$ by 
    numbers $a_\fp^2/\epsilon(\fp)$ for $\fp \nmid N$.
\end{prop}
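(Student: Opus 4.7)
The plan is to establish $F = F'$, where $F' := \QQ(a_\fp^2/\epsilon(\fp) : \fp \nmid N)$, using the characterization of $F$ as the fixed field in $E$ of the group of inner twists of $f$ (noted in the text above the statement, citing \cite[B.4.11]{nekovavr2012level}).

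For the inclusion $F' \subseteq F$, I would take two embeddings $\sigma, \tau : E \hookrightarrow \QQ_\ell$ agreeing on $F$. By Lemma \ref{center_is_L} they restrict equally to $L=F$, so $V_\sigma \simeq V_\tau$ as $G_M$-representations, and Lemma \ref{simpleArg} together with its corollary furnish a character $\phi:G_K\to\QQ_\ell^\times$ with $\sigma(a_\fp) = \phi(\mathrm{Frob}_\fp)\tau(a_\fp)$ and $\phi^2 = {}^\sigma\epsilon/{}^\tau\epsilon$. Squaring the trace relation and dividing by $\sigma(\epsilon(\fp))$ gives $\sigma(a_\fp^2/\epsilon(\fp)) = \tau(a_\fp^2/\epsilon(\fp))$, so each $a_\fp^2/\epsilon(\fp)\in E$ is invariant under all embedding pairs fixing $F$ and hence lies in $F$.

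For the reverse inclusion $F \subseteq F'$, I would argue via the adjoint representation. Given $\sigma, \tau$ agreeing on $F'$, the trace of $\mathrm{Frob}_\fp$ on the 4-dimensional $G_K$-representation $\mathrm{End}(V_\sigma)$ equals $\sigma(a_\fp)^2/(\sigma(\epsilon(\fp))\mathrm{Nm}(\fp))$, so the hypothesis (together with $\mathrm{Nm}(\fp)\in\QQ$) forces these traces to agree for $V_\sigma$ and $V_\tau$ at every unramified prime. By Chebotarev and semisimplicity (Faltings), one obtains $\mathrm{End}(V_\sigma) \simeq \mathrm{End}(V_\tau)$ as $G_K$-representations, and extracting the trace-zero summand yields $\mathrm{ad}^0(V_\sigma) \simeq \mathrm{ad}^0(V_\tau)$. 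I would then upgrade this to a character twist $V_\sigma \simeq V_\tau \otimes \chi$ over $G_K$: viewed through $\mathrm{PGL}_2\hookrightarrow\mathrm{GL}(\mathfrak{sl}_2)$, the adjoint isomorphism provides a $\mathrm{PGL}_2$-conjugation of the projective representations $\mathbb{P}\rho_\sigma$ and $\mathbb{P}\rho_\tau$ (the non-CM hypothesis enters here to guarantee that the projective image is Zariski-dense in $\mathrm{PGL}_2$, so that the conjugating element in $\mathrm{GL}_3$ normalises $\mathrm{PGL}_2$ and hence lies in it up to the centre of $\mathrm{GL}_3$); any $\mathrm{GL}_2$-lift of the resulting projective isomorphism then gives, by a short direct check, a character $\chi:G_K\to\QQ_\ell^\times$ with $V_\sigma \simeq V_\tau\otimes\chi$. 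This says exactly that $\sigma$ and $\tau$ differ by an inner twist of $f$, so they agree on the fixed field $F$ of the inner twist group.

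The main obstacle I expect is precisely the descent from the $G_K$-module isomorphism $\mathrm{ad}^0(V_\sigma)\simeq\mathrm{ad}^0(V_\tau)$ to an honest linear twist $V_\sigma \simeq V_\tau \otimes \chi$. This step really uses the non-CM hypothesis (to secure Zariski density of the projective image and to rule out unwanted quadratic self-twists that would make the reduction from $\mathrm{GL}_3$- to $\mathrm{PGL}_2$-conjugation subtler), plus a small cohomological verification that the scalar-valued discrepancy function is genuinely multiplicative. Everything else in the argument is routine Chebotarev and trace bookkeeping.
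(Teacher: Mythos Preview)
The paper does not prove this proposition; it is simply quoted from \cite{conti2023big}. Your proposal therefore supplies an independent argument, and the adjoint-representation route you take is more hands-on than the general Lie-theoretic framework of that reference. Your outline is essentially sound: the inclusion $F'\subseteq F$ follows directly from Lemma~\ref{simpleArg} and its corollary, and for the reverse inclusion the step you flag as the main obstacle---upgrading $\mathrm{ad}^0(V_\sigma)\simeq\mathrm{ad}^0(V_\tau)$ to a character twist---works as you sketch, since the normalizer of $\mathrm{SO}_3$ in $\mathrm{GL}_3$ modulo scalars is just $\mathrm{SO}_3$ (because $\mathrm{O}_3=\{\pm I\}\cdot\mathrm{SO}_3$ with $-I$ central in $\mathrm{GL}_3$), and the non-CM hypothesis supplies the needed Zariski density of the projective image.

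One point you gloss over: the final clause ``so they agree on the fixed field $F$'' does not follow immediately from $V_\sigma\simeq V_\tau\otimes\chi$, because $\sigma,\tau$ are embeddings $E\hookrightarrow\QQ_\ell$ rather than automorphisms of $E$, and $E/\QQ$ need not be Galois. The clean fix is to note that $\chi$ has finite order (its square is ${}^\sigma\epsilon/{}^\tau\epsilon$), enlarge $M$ so that $\chi|_{G_M}=1$, and then invoke the characterization from the proof of Lemma~\ref{center_is_L}: one has $\sigma|_F=\tau|_F$ if and only if $V_\sigma\simeq V_\tau$ as $G_M$-representations. Since $F=Z(X)$ is intrinsic to $A_f$ and independent of the choice of $M$, this closes the argument.
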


Now let $\sigma\in G_K$, then $\sigma$
acts on the $\overline{\QQ}$-endomorphisms of $A_f$ by 
$\sigma(\phi)(x):=\sigma(\phi(\sigma^{-1}x))$ and this linearly extends to an action on $X$. $E$ is clearly invariant under the action of $G_K$ on X (we are identifying $E$ with the maximal subfield of $X$). Since this
is an automorphism of $F$-algebras, By the Skolem-Noether theorem
the action of $\sigma$ is given by conjugation by some
element $\alpha(\sigma)\in X$. Since $E$ is invariant under the
Galois action, $\alpha(\sigma)$ commutes with $E$ and therefore
$\alpha(\sigma)\in E$ because $E$ is a maximal subfield and hence its own centralizer. The next theorem relates the map $\alpha$ which is of geometric (motivic) nature to the (automorphic) data of Hecke eigenvalues. 

\begin{theo}\label{alpha2trivial}
    For every $\sigma \in G_K$ one has $\alpha (\sigma)^2 / \epsilon(\sigma) \in F^{\times}$.
    Moreover, for every prime ideal $\mathfrak{p}$
    of $O_K$ away from $\ell N$, if $a_{\mathfrak{p}}\neq 0$ then
    $\alpha (\mathrm{Frob}_{\mathfrak{p}}) \equiv
    a_{\mathfrak{p}}$ modulo $F^{\times}$.
\end{theo}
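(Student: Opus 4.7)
The plan is to move the conjugation-by-$\alpha(g)$ defining relation across the Faltings isomorphism $X \otimes_{\QQ} \QQ_\ell = \mathrm{End}_{\QQ_\ell[G_M]}(V_\ell)$ and compare it with conjugation by $\rho_{f,\ell}(g)$. For any endomorphism $\phi$ of $A_f$ over $\overline K$ the Tate module of $g\phi g^{-1}$ is $\rho_{f,\ell}(g)\, T\phi\, \rho_{f,\ell}(g)^{-1}$, so the $G_K$-action on $X \otimes \QQ_\ell$ is inner conjugation by $\rho_{f,\ell}(g)$. Comparing with conjugation by $\alpha(g) \in E \subset X \otimes \QQ_\ell$, the element $\rho_{f,\ell}(g)\alpha(g)^{-1}$ must centralize $X \otimes \QQ_\ell$ inside $\mathrm{End}(V_\ell)$. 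This is the one identity the whole argument rests on.

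I then exploit this centralizer condition through the decomposition $V_\ell = \bigoplus_\sigma V_\sigma$. Fix $\gamma : F \hookrightarrow \QQ_\ell$ with an extension $\sigma_0 : E \hookrightarrow \QQ_\ell$, and for every other extension $\sigma$ let $\phi_\sigma : G_K \to \QQ_\ell^\times$ be the character provided by Lemma \ref{simpleArg}, so that $V_\sigma \simeq V_{\sigma_0} \otimes \phi_\sigma$ as $G_K$-representations; recall from the proof of that lemma that $\phi_\sigma$ is trivial on $G_M$, and that $\phi_{\sigma_0} = 1$. Assembling these isomorphisms identifies $V_\gamma = \bigoplus_{\sigma|_F = \gamma} V_\sigma$ with $V_{\sigma_0} \otimes W_\gamma$, where $W_\gamma$ has a basis $\{e_\sigma\}$ on which $G_K$ acts by $\phi_\sigma$. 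Under this presentation, $\rho_{f,\ell}(g)|_{V_\gamma}$ becomes $\rho_{f,\ell}(g)|_{V_{\sigma_0}} \otimes \mathrm{diag}(\phi_\sigma(g))$, multiplication by $\alpha(g) \in E$ becomes $1 \otimes \mathrm{diag}(\sigma(\alpha(g)))$, and $\mathrm{End}_{G_M}(V_\gamma)$ sits as $1 \otimes M_{[E:F]}(\QQ_\ell)$. Since the centralizer of $1 \otimes M_{[E:F]}(\QQ_\ell)$ inside $M_2(\QQ_\ell) \otimes M_{[E:F]}(\QQ_\ell)$ is $M_2(\QQ_\ell) \otimes 1$, the centralizer condition forces $\mathrm{diag}(\phi_\sigma(g)/\sigma(\alpha(g)))$ to be a scalar matrix; reading off the scalar at $\sigma_0$ gives the key identity
\[
\phi_\sigma(g) \;=\; \sigma(\alpha(g))/\sigma_0(\alpha(g)) \quad \text{for every } \sigma \text{ extending } \gamma.
\]

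Both parts of the theorem now follow by substitution. For the first, taking determinants in $V_\sigma \simeq V_{\sigma_0} \otimes \phi_\sigma$ and using $\det \rho_{f,\ell} = \epsilon\,\chi_\ell$ yields $\phi_\sigma(g)^2 = \sigma(\epsilon(g))/\sigma_0(\epsilon(g))$; feeding in the displayed identity turns this into $\sigma(\alpha(g)^2/\epsilon(g)) = \sigma_0(\alpha(g)^2/\epsilon(g))$ for every $\sigma$ over $\gamma$, and since $\ell$ splits completely in $E$ this forces $\alpha(g)^2/\epsilon(g) \in F^\times$. For the second, specialize to $g = \mathrm{Frob}_\fp$ with $a_\fp \neq 0$ and substitute $\phi_\sigma(\mathrm{Frob}_\fp) = \sigma(a_\fp)/\sigma_0(a_\fp)$ from the corollary after Lemma \ref{simpleArg}; the displayed identity then reads $\sigma(a_\fp/\alpha(\mathrm{Frob}_\fp)) = \sigma_0(a_\fp/\alpha(\mathrm{Frob}_\fp))$ on every $\gamma$-fiber, whence $a_\fp/\alpha(\mathrm{Frob}_\fp) \in F^\times$.

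The main technical step is setting up the tensor decomposition $V_\gamma \simeq V_{\sigma_0} \otimes W_\gamma$ compatibly with both the $E$-action (diagonal on the second factor, through $\sigma$ on the $\sigma$-coordinate) and the $G_K$-action (through the characters $\phi_\sigma$), which requires assembling the isomorphisms from Lemma \ref{simpleArg} uniformly across all $\sigma$ extending $\gamma$, and then recognizing the correct centralizer inside $M_2 \otimes M_{[E:F]}$. Once this is in place, both statements reduce to a short manipulation with the displayed identity.
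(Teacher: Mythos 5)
Your proposal is correct and takes essentially the same route as the paper: both arguments rest on Faltings' theorem, the Skolem--Noether description of the Galois action as conjugation by $\alpha(g)$, Lemma \ref{simpleArg} and its corollary, and both boil down to the key identity $\phi_\sigma(g)=\sigma(\alpha(g))/\tau(\alpha(g))$ followed by the determinant (resp.\ trace) relation. The only difference is packaging: the paper reads the identity off from the two $G_K$-actions on the summands $\mathrm{Hom}_{\QQ_\ell[G_M]}(V_\sigma,V_\tau)$ of $X\otimes\QQ_\ell$, while you obtain it from the condition that $\rho_{f,\ell}(g)\alpha(g)^{-1}$ centralizes $X\otimes\QQ_\ell$, computed in the tensor decomposition $V_\gamma\simeq V_{\sigma_0}\otimes W_\gamma$ --- the same computation in a slightly different guise.
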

\begin{proof}
    As usual, let $\ell$ be a prime number that
    splits completely in $E$.
    It enough to prove that for every pair of embeddings
    $\sigma$ and $\tau$ of $E$ in $\QQ_{\ell}$
    that agree on $F$ one has 
    $\sigma(\alpha^2 /\epsilon)=\tau(\alpha^2 /\epsilon)$.

    Now if $\sigma$ and $\tau$ agree on $F$ then
    by lemma \ref{simpleArg} there exist
    a character $\phi:G_K \rightarrow \QQ_{\ell}^{\times}$
    such that $V_{\sigma}\simeq V_{\tau} \otimes \phi$
    as representation of $G_K$. 
    In particular, it implies that as 1-dimensional
    representation of $G_K$ one has
    $$
    \mathrm{Hom}_{\QQ_{\ell}[G_M]}(V_{\sigma}
    ,V_{\tau}) \simeq \phi
    $$
    Also note that if $\sigma$ and $\tau$ don't
    agree on $F$ then they are not isomorphic
    as $G_M$-representation hence
    $$
    \mathrm{Hom}_{\QQ_{\ell}[G_M]}(V_{\sigma}
    ,V_{\tau}) = 0
    $$
    Therefore we can completely understand
    $X \otimes \QQ_{\ell}$:
    $$
    \mathrm{End}^{0}_M (A_f) \otimes \QQ_{\ell}
    \simeq 
    \mathrm{End}_{\QQ_{\ell}[G_M]}(\oplus V_{\gamma})
    =\oplus_{\sigma,\tau}
    \mathrm{Hom}_{\QQ_{\ell}[G_M]}(V_{\sigma}
    ,V_{\tau})
    $$
    Now remember that on the LHS, $g\in G_K$
    acts via conjugation by $\alpha(g)$. Hence,
    it acts on $V_\sigma$ and $V_\tau$ by
    $\sigma(\alpha(g))$ and $\tau(\alpha(g))$
    respectively. Now assume that $\sigma$ and
    $\tau$ agree on $F$. Then $g$ acts on
    $\mathrm{Hom}_{\QQ_{\ell}[G_M]}(V_{\sigma}
    ,V_{\tau})$
    by $\sigma(\alpha(g))/\tau(\alpha(g))$. On the
    other hand as a representation of $G_K$ this
    is just $\phi$ so
    $\sigma(\alpha(g))/\tau(\alpha(g)) = \phi (g)$. 
    Since $\phi^2=\frac{{}^\sigma \epsilon}{{}^\tau \epsilon}$ one deduces that $\sigma(\alpha^2 /\epsilon)=\tau(\alpha^2 /\epsilon)$ and the result follows.

    For the second part, first notice that
    $$
    \phi (\mathrm{Frob}_{\mathfrak{p}})
    =
    \sigma(\alpha(\mathrm{Frob}_{\frak{p}}))/
    \tau(\alpha(\mathrm{Frob}_{\frak{p}}))
    =
    \sigma(a_{\frak{p}})/
    \tau(a_{\frak{p}})
    $$
    therefore 
    $$
    \sigma(\alpha(\mathrm{Frob}_{\frak{p}})
    /a_{\frak{p}})
    =
    \tau(\alpha(\mathrm{Frob}_{\frak{p}})
    /a_{\frak{p}})
    $$
    which implies the result.
\end{proof}

\section{Twisted Algebras and Galois Cohomology} \label{twistAlg}

The goal of this section is to prove an analogue of \cite[Theorem 5.6]{ribet1992abelian} in our setting. Ribet uses a result of Chi to prove this theorem.
In \cite{chi1987twists}, Chi studies the twists of a central simple algebra by a 1-cocycle. We need to review some of his results and generalize some of those to our setting. 

First note that the endomorphism ring $\mathrm{End}_{\overline{\QQ}}(A_f)$ acts
on the space of differential 1-forms on
$A_f/\overline{\QQ}$ (which we denote by $\Omega^{1}_{\overline{\QQ}}$)  via pull back. For an
endomorphism $\phi$ and a 1-form $\omega$ we use the usual
notation $\phi^{*}\omega$ for this action. 
This action linearly extends
to an action of $X$ on this space and we use the same notation
for this action as well. Also, note that for 
any $\sigma \in G_K$ and $\phi \in X$ one has
\begin{equation} \label{X_action}
    \sigma (\phi^{*}\omega)=(\sigma \phi)^{*}(\sigma\omega)
    = (\alpha(\sigma)\cdot \phi \cdot \alpha(\sigma)^{-1})^{*}
    (\sigma\omega)
\end{equation}

For $\sigma$ and $\tau$ in $G_K$, define $c_{\alpha}(\sigma,\tau)
:=\alpha(\sigma)\alpha(\tau)\alpha(\sigma\tau)^{-1}$. This commutes
with every element in $X$
so it lands in $F$. Therefore $\alpha$
gives a well-defined group homomorphism
$$
\alpha_K: G_K \rightarrow \frac{E^{\times}}{F^{\times}}
$$
Let $\alpha_{FK}$ be the
restriction of $\alpha_K$ to $G_{FK}$. We sometimes use the same
notation to denote the composition of this map with the canonical 
map to $(EK)^{\times}/(FK)^{\times}$:
$$
\alpha_{FK}: G_{FK} \rightarrow \frac{E^{\times}}{F^{\times}}
\rightarrow \frac{(EK)^{\times}}{(FK)^{\times}}
$$
Let $X_{FK}:=X \otimes_{F} FK$. This is an algebra
over $FK$. Note that every element in $FK$ is a sum of
the form $\sum_i f_ik_i$ for $f_i \in F$ and $k_i \in K$
so $X_{FK}$ is generated by pure tensors of the form
$\sum_i \phi_i \otimes k_i$ for $k_i \in K$. 

As in \cite{chi1987twists} one can look at the twist of this
algebra with (the 1-cocycle defined by) $\alpha$
which we denote by $X_{FK}(\alpha_{FK})$ following Chi. 

\begin{prop}[\cite{chi1987twists}, Proposition 1.1]
    One has 
    $$
    \dim_{FK} X_{FK}(\alpha_{FK}) = \dim_{FK} X_{FK} = dim_{F} X
    $$
    and Moreover
    $$
    X_{FK}(\alpha_{FK}) \otimes_{FK} \overline{\QQ} \simeq 
    X_{FK} \otimes_{FK} \overline{\QQ}
    $$
    Therefore, $X_{FK}(\alpha_{FK})$ is a central simple $FK$-algebra.
\end{prop}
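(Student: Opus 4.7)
The plan is to realize $X_{FK}(\alpha_{FK})$ concretely as the descent of $X_{FK}$ along a twisted Galois action, and then read off all three claims from the general framework of Galois descent for central simple algebras.

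First, I would make the twist precise. Because $F=Z(X)$ by the preceding corollary and $E$ is a maximal subfield of $X$, the compositum $EK$ is a maximal $FK$-subfield of $X_{FK} = X \otimes_F FK$; conjugation by $(EK)^\times$ therefore defines $FK$-algebra automorphisms of $X_{FK}$, and the central subgroup $(FK)^\times$ acts trivially. Since $c_\alpha(\sigma,\tau) \in F^\times \subseteq Z(X_{FK})$, the map $\alpha_{FK}\colon G_{FK} \to (EK)^\times/(FK)^\times$ is a genuine group homomorphism, and composing with inner conjugation yields a $1$-cocycle $G_{FK} \to \mathrm{Aut}_{FK\text{-alg}}(X_{FK})$. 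I would then define $X_{FK}(\alpha_{FK})$ as the $FK$-subalgebra of $X_{FK} \otimes_{FK} \overline{\QQ}$ fixed under the twisted action
$$g \ast x \;=\; \tilde\alpha(g) \cdot g(x) \cdot \tilde\alpha(g)^{-1},$$
where $\tilde\alpha(g) \in (EK \otimes_{FK} \overline{\QQ})^\times$ is any lift of $\alpha_{FK}(g)$. Independence of the lift and the group-action axioms both follow from the fact that elements of $Z(X_{FK})^\times$ act trivially by conjugation.

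Next, I would invoke the classical descent theorem: the twisted action makes $X_{FK} \otimes_{FK} \overline{\QQ}$ into a semilinear $G_{FK}$-module whose fixed subring is $FK$, and by Hilbert 90 (in the form of Speiser's lemma applied to the underlying $\overline{\QQ}$-vector space, combined with the fact that the twisted action is by algebra automorphisms) the natural evaluation map
$$X_{FK}(\alpha_{FK}) \otimes_{FK} \overline{\QQ} \;\xrightarrow{\;\sim\;}\; X_{FK} \otimes_{FK} \overline{\QQ}$$
is an isomorphism of $\overline{\QQ}$-algebras. This one step delivers the dimension identity $\dim_{FK} X_{FK}(\alpha_{FK}) = \dim_{FK} X_{FK} = \dim_F X$ as well as the second displayed isomorphism. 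Central simplicity is then automatic: since $X$ is central simple over $F$ and $FK/F$ is a field extension, $X_{FK} \otimes_{FK} \overline{\QQ}$ is a matrix algebra over $\overline{\QQ}$, hence so is $X_{FK}(\alpha_{FK}) \otimes_{FK} \overline{\QQ}$, and being geometrically a matrix algebra is the standard criterion for being central simple over $FK$.

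The main obstacle is not a deep theorem but bookkeeping with the quotient $(EK)^\times/(FK)^\times$: because the cocycle takes values in a quotient rather than directly in an automorphism group, one must carefully verify that the formula for $\ast$, the group-action axioms, and the invariance of the fixed-point subalgebra are all independent of chosen lifts $\tilde\alpha(g)$. The algebraic input that makes this work is precisely the earlier identification of $F$ with the center of $X$ (and hence $FK$ with the center of $X_{FK}$), ensuring that the ambiguity in lifting lies entirely in the center and is therefore invisible to conjugation.
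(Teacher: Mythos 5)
Your argument is correct and is essentially the standard proof of this statement (the paper itself gives no proof, citing Chi's Proposition 1.1, whose argument is exactly this Galois-descent construction): realize $X_{FK}(\alpha_{FK})$ as the fixed algebra of the $\alpha$-twisted semilinear $G_{FK}$-action on $X_{FK}\otimes_{FK}\overline{\QQ}$, note that the lifting ambiguity and the coboundary $c_\alpha$ are central, and apply descent to get the dimension count, the isomorphism after base change, and hence central simplicity. Two minor points to tidy up: you neither need nor should assert that $EK$ is a \emph{maximal} subfield of $X_{FK}$ (indeed $E\otimes_F FK$ need not even be a field; it suffices to take the lifts $\tilde\alpha(g)$ in $E^{\times}\subset X_{FK}^{\times}$, whose ambiguity lies in the central $F^{\times}$), and for descent along the infinite extension $\overline{\QQ}/FK$ one should remark that the twisted action is continuous, since $\alpha$ factors through $\mathrm{Gal}(M/K)$ for a finite extension $M$ over which all endomorphisms of $A_f$ are defined.
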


One can also view $X_{FK}$ as a $K$-algebra and twist it with
$\alpha_K$ instead to get the $K$-algebra $X_{FK}(\alpha_K)$.
Let us recall the definition of this algebra. First for any
$\sigma \in G_K$ we define the twisted action of $\sigma$ on
$X_{FK}\otimes_K \overline{\QQ}$ as follow. On pure tensors
of the form $\phi \otimes k \otimes \lambda$ for $\phi \in X$,
$k \in K$ and $\lambda \in \overline{\QQ}$ we define:
$$
tw(\sigma)\cdot(\phi \otimes k \otimes \lambda):=\alpha(\sigma)\phi
\alpha(\sigma)^{-1}\otimes k \otimes \sigma(\lambda)
$$
Note that $k=\sigma(k)$ in the above expression.  Now we define:
$$
X_{FK}(\alpha_K):=(X_{FK}\otimes_{K}\overline{\QQ})^{tw(G_K)}
$$
This $K$-algebra also has the structure of an $FK$-algebra via
$a\cdot \sum\psi_i\otimes\lambda_i := \sum a\psi_i\otimes \lambda_i$ for $a\in FK$, $\psi_i \in X_{FK}$ and $\lambda_i \in \overline{\QQ}$.

\begin{prop}[\cite{chi1987twists}, Proposition 1.2]
    One has $X_{FK}(\alpha_{FK})\simeq X_{FK}(\alpha_{K})$
    as $FK$-algebras.
\end{prop}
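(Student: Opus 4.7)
The plan is to realize both algebras as invariants of a single $\overline{\QQ}$-algebra via two different presentations of the descent, and to compare them by a Shapiro-lemma-type identification of invariants.

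First I would split the étale $FK$-algebra $FK \otimes_K \overline{\QQ}$ as a product $\prod_\iota \overline{\QQ}$ indexed by the $K$-algebra embeddings $\iota: FK \hookrightarrow \overline{\QQ}$, and tensor with $X_{FK}$ over $FK$ to obtain
$$
X_{FK}\otimes_K \overline{\QQ} \;\simeq\; \prod_\iota \bigl(X_{FK}\otimes_{FK,\iota}\overline{\QQ}\bigr)
$$
as $FK$-algebras, with $a \in FK$ acting on the $\iota$-factor through $\iota(a)$. I would then transport the prescribed twisted action $tw(\sigma)(\phi\otimes k\otimes\lambda) = \alpha(\sigma)\phi\alpha(\sigma)^{-1}\otimes k\otimes \sigma(\lambda)$ through this decomposition. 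Since $G_K$ acts transitively on the indexing set via $\sigma\cdot\iota = \sigma\circ\iota$, with stabilizer $G_{FK}$ at the natural inclusion $\iota_0$, a direct check shows that $tw(\sigma)$ permutes the factors by $\iota\mapsto \sigma\iota$ while, within each factor, conjugating by $\alpha(\sigma)$ on the $X$-part and applying $\sigma$ to the $\overline{\QQ}$-entry.

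Next I would invoke the standard descent principle (essentially Shapiro's lemma): for any $G_K$-module $\prod_{\iota\in G_K/G_{FK}} V_\iota$ on which $G_K$ permutes the factors through its action on the cosets, projection to the $\iota_0$-coordinate yields a bijection $\bigl(\prod_\iota V_\iota\bigr)^{G_K} \xrightarrow{\sim} V_{\iota_0}^{G_{FK}}$, with inverse sending $w$ to the tuple $(\sigma w)_{\sigma G_{FK}}$ (well-defined by $G_{FK}$-invariance of $w$). Applied here this gives
$$
X_{FK}(\alpha_K) \;\simeq\; \bigl(X_{FK}\otimes_{FK}\overline{\QQ}\bigr)^{tw(G_{FK})},
$$
and the restriction of the twist to $G_{FK}$ uses the cocycle $\alpha|_{G_{FK}}$, which under $E \hookrightarrow EK \subset X_{FK}$ represents exactly $\alpha_{FK}$ modulo $(FK)^\times$; so the right-hand side is by definition $X_{FK}(\alpha_{FK})$.

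Finally I would verify $FK$-linearity: the $FK$-action on the full product is left multiplication on the $X_{FK}$-factor, which preserves each $\iota$-component and commutes with $tw(G_K)$, so projection to the $\iota_0$-factor (where $\iota_0$ is the identity inclusion) transports it to the natural $FK$-structure on $X_{FK}(\alpha_{FK})$. The main bookkeeping obstacle is verifying that the cocycle identifications really match—that conjugation by $\alpha(\sigma)\otimes 1 \in X_{FK}$ represents the class $\alpha_{FK}(\sigma) \in (EK)^\times/(FK)^\times$ dictated by the canonical map $E^\times/F^\times \to (EK)^\times/(FK)^\times$—after which the descent argument runs formally.
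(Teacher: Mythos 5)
There is nothing in the paper to compare against here: the statement is quoted directly from Chi (\cite{chi1987twists}, Proposition 1.2) and the paper supplies no proof. Your argument is correct, and it is the standard descent proof of this fact (essentially the one in Chi): decompose $X_{FK}\otimes_K\overline{\QQ}\simeq \prod_{\iota}\bigl(X_{FK}\otimes_{FK,\iota}\overline{\QQ}\bigr)$ over the $K$-embeddings $\iota$ of $FK$, note that $tw(\sigma)$ permutes the factors exactly as the untwisted action does (conjugation by $\alpha(\sigma)$ fixes the center $FK$, hence fixes the idempotents of $FK\otimes_K\overline{\QQ}$), and identify the $tw(G_K)$-invariants of the product with the $tw(G_{FK})$-invariants of the factor at the identity embedding. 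Two points you leave implicit deserve a sentence each when writing this up: first, $tw$ is an honest action of $G_K$ (not just a cocycle-twisted one) because $c_\alpha(\sigma,\tau)=\alpha(\sigma)\alpha(\tau)\alpha(\sigma\tau)^{-1}$ lies in $F^{\times}\subset (FK)^{\times}$ and hence acts trivially by conjugation; this is exactly what makes your inverse map $w\mapsto (tw(\sigma)w)_{\sigma G_{FK}}$ well defined on cosets. Second, on the distinguished factor the restricted twist is conjugation by $\alpha(\sigma)\otimes 1$, which represents $\alpha_{FK}(\sigma)$ modulo the central $(FK)^{\times}$, so the invariants there are by definition $X_{FK}(\alpha_{FK})$, and the projection matches the $FK$-structure the paper puts on $X_{FK}(\alpha_K)$ (scalar action through the $X_{FK}$-factor) with the natural one on $X_{FK}(\alpha_{FK})$; with these spelled out, the proof is complete.
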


This implies that 
$X_{FK}(\alpha_{K})=(X_{FK}\otimes_{K}\overline{\QQ})^{tw(G_K)}$
is also a central simple $FK$-algebra. From now on we simply
write $X_{FK}(\alpha)$ for this central simple algebra.

$E$ is a subfield of $X_{FK}$. Let $L$ be a maximal subfield
of $X_{FK}$ containing $E$. Then $L$ contains $EK$ as well. 
So one can look at $\alpha_{FK}$ as a group homomorphism
$$
\alpha_{FK}:G_{FK} \rightarrow \frac{L^{\times}}{(FK)^{\times}}
$$
which has values in $E$. Now one can apply \cite[Proposition 2.4]{chi1987twists} to get:
$$
X_{FK}(\alpha_{FK}) \otimes_{FK} \mathrm{End}_{FK} L \simeq
X_{FK} \otimes_{FK} \mathrm{End}_{FK} L (\alpha_{FK})
$$
So in the $Br(FK)$ one has
$$
[X_{FK}(\alpha)] = [X_{FK}] + [\mathrm{End}_{FK} L (\alpha_{FK})]
$$
From this point onward, we assume that the central character $\epsilon$ of $f$ is trivial. In the general case, one also needs to carry the 2-cocycle $c_\epsilon = [\mathrm{End}_{FK} L (\epsilon)]$ in the calculations as in \cite{quer1998classe} which complicates some of the computations. Having this assumption, now we can prove:

\begin{lemma}\label{Schur_index} Assuming $\epsilon$ is trivial,
    the order of $[X_{FK}(\alpha)]$ in $Br(FK)$ divides 2. 
\end{lemma}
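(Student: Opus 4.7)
The plan is to construct an involution of the first kind on the central simple $FK$-algebra $X_{FK}(\alpha)$; by a classical theorem of Albert, any central simple algebra equipped with such an involution has Brauer class of order dividing $2$, which would immediately yield the lemma.

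The input would be the Rosati involution $\iota$ on $X=\mathrm{End}^{0}_{\overline{\QQ}}(A_f)$ coming from a polarization of $A_f$. Under the standing hypotheses (parallel weight $2$, trivial central character, non-CM), the Hecke eigenvalues $a_\fp$ are totally real, so $E$ and its subfield $F$ are totally real number fields. Since $\iota$ is a positive involution of $X$ whose restriction to $E$ is a positive involution of $E$, and since the only positive involution of a totally real number field is the identity, we get $\iota|_E=\mathrm{id}_E$; in particular $\iota$ fixes $F$ and is an involution of the first kind on $X$.

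I would then extend $\iota$ first $FK$-linearly to $X_{FK}$ and then $\overline{\QQ}$-linearly to an involution $\tilde\iota$ on $X_{FK}\otimes_{K}\overline{\QQ}$. The crux of the argument is to verify that $\tilde\iota$ commutes with the twisted Galois action $tw(\sigma)$ defined before the lemma, so that it descends to the fixed subalgebra. A direct computation on a pure tensor gives
\begin{equation*}
\tilde\iota(tw(\sigma)(\phi\otimes k\otimes\lambda))
=\alpha(\sigma)^{-1}\iota(\phi)\alpha(\sigma)\otimes k\otimes\sigma(\lambda),
\end{equation*}
using $\iota(\alpha(\sigma))=\alpha(\sigma)$ since $\alpha(\sigma)\in E$; on the other hand,
\begin{equation*}
tw(\sigma)(\tilde\iota(\phi\otimes k\otimes\lambda))
=\alpha(\sigma)\iota(\phi)\alpha(\sigma)^{-1}\otimes k\otimes\sigma(\lambda).
\end{equation*}
Equality for every $\phi\in X$ is equivalent to $\alpha(\sigma)^2$ being central in $X$, i.e. $\alpha(\sigma)^2\in F^\times$, which is precisely Theorem~\ref{alpha2trivial} under the assumption that $\epsilon$ is trivial.

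Once compatibility is in place, $\tilde\iota$ descends to an involution $\bar\iota$ on $X_{FK}(\alpha)=(X_{FK}\otimes_{K}\overline{\QQ})^{tw(G_K)}$ that fixes the center $FK$, hence an involution of the first kind. The standard consequence is then that $X_{FK}(\alpha)$ is isomorphic to its opposite algebra, so $[X_{FK}(\alpha)]=-[X_{FK}(\alpha)]$ in $Br(FK)$ and the lemma follows. The main subtlety is the assertion $\iota|_E=\mathrm{id}_E$; this relies on both the positivity of the Rosati involution and the totally real nature of $E$ (a consequence of parallel weight $2$ and trivial central character), and is the point at which the hypothesis on $\epsilon$ genuinely enters the argument.
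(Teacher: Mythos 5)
Your proposal is correct in substance but takes a genuinely different route from the paper. The paper's proof works through the splitting $[X_{FK}(\alpha)] = [X_{FK}] + [\mathrm{End}_{FK} L(\alpha_{FK})]$ obtained from Chi's Proposition 2.4 for a maximal subfield $L \supset EK$, kills the second summand by Chi's Proposition 2.2 together with Theorem \ref{alpha2trivial} (so that $2\cdot[\mathrm{End}_{FK}L(\alpha_{FK})]=[\mathrm{End}_{FK}L(\epsilon)]=0$ when $\epsilon=1$), and bounds the first summand by Proposition B.4.12 of \cite{nekovavr2012level}, which says $X$ has Schur index dividing $2$. You instead bound the twisted algebra directly: the Rosati involution, extended to $X_{FK}\otimes_K\overline{\QQ}$, commutes with the twisted action $tw(\sigma)$ precisely because $\alpha(\sigma)^2\in F^{\times}$ (Theorem \ref{alpha2trivial} with $\epsilon=1$), descends to an $FK$-linear involution of the first kind on $X_{FK}(\alpha)$, and Albert's theorem finishes. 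This avoids both the decomposition via $L$ and the appeal to \cite{nekovavr2012level}; indeed it is the same mechanism as the classical proof that endomorphism algebras of Ribet--Pyle varieties have Schur index at most $2$, so your argument in effect reproves that input rather than quoting it. The cohomological decomposition is still needed later in the paper (for Corollary \ref{cocycle}), but not for this lemma.

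One point in your write-up is asserted where it needs an argument: that the Rosati involution satisfies $\iota(E)=E$ at all, so that ``its restriction to $E$'' makes sense. For an arbitrary polarization, $\iota$ need not preserve the chosen maximal subfield $E\subset X$. The standard fix is to take a polarization of $A_f$ defined over $K$ (such a polarization always exists); then $\iota$ commutes with the $G_K$-action on $X$, hence preserves the $G_K$-fixed subalgebra $E=\mathrm{End}_K(A_f)\otimes_{\ZZ}\QQ$. With that in place the rest goes through as you say: $\iota|_E$ is a positive involution of $E$, and since $\epsilon=1$ forces the $a_{\fp}$ to be totally real, $E$ is totally real and $\iota|_E=\mathrm{id}$, giving $\iota(\alpha(\sigma))=\alpha(\sigma)$, $\iota|_F=\mathrm{id}$, and hence an involution of the first kind on the descended algebra.
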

\begin{proof}
So far we proved 
$$
[X_{FK}(\alpha)] = [X_{FK}] + [\mathrm{End}_{FK} L (\alpha_{FK})]
$$
in $\mathrm{Br}(FK)$.
    By \cite[Proposition B.4.12]{nekovavr2012level} we know that $X$ and hence $X_{FK}$ 
have Schur index dividing 2. Also from theorem
\ref{alpha2trivial}
we know
that $\alpha^2=\epsilon$ modulo $F^\times$. Applying \cite[Proposition 2.2]{chi1987twists} we get:
$$
2\cdot [\mathrm{End}_{FK} L (\alpha_{FK})] = 
[\mathrm{End}_{FK} L (\alpha_{FK}^2)]
= [\mathrm{End}_{FK} L (\epsilon)] 
$$
Since $\epsilon=1$ we are done.
\end{proof}
From section 2 of \cite{chi1987twists} we know  that $[\mathrm{End}_{FK}L(\alpha_{FK})]$
in $Br(FK)=H^2(G_{FK}, \overline{\QQ})$ is the same as the image
of the cohomology class defined by $\alpha$ in 
$H^1(G_{FK},PGL_n(\overline{\QQ}))$ under the connecting
homomorphism
$$
\delta : H^1(G_{FK},PGL_n(\overline{\QQ})) \rightarrow
H^2(G_{FK}, \overline{\QQ})
$$
where $n=[L:FK]$. More concretely, one can view every 
$\ell \in L$ as an $FK$-linear endomorphism $\ell:L\rightarrow L$
given by multiplication by $\ell$. So every $\ell$ can be
viewed as an $n\times n$ matrix with $FK$-entries. Now viewing
every $\alpha(\sigma) \in E$ as such a matrix, conjugation by
this matrix gives an element in $PGL_n(FK)\subset PGL_n(\overline{\QQ})$.
This gives 1-cocycle with $PGL_n(FK)$ or rather 
with $PGL_n(\overline{\QQ})$ values that is invariant under
the $G_{FK}$ action. Since the connecting homomorphism
$\delta$ sends a 1-cocycle $f$ to $f(\sigma)\sigma(f(\tau))f(\sigma, \tau)^{-1}$ one concludes:
\begin{cor}
    Let 
    $c_\alpha (\sigma,\tau)=
    \alpha(\sigma)\alpha(\tau)\alpha(\sigma\tau)^{-1}$ 
    be a 2-cocycle for the trivial action of $G_K$ on
    $F^{\times}$. Then the image of $[c_\alpha]$ under the 
    sequence
    $$
    H^2(G_K,F^{\times}) \xrightarrow{\res} 
    H^2(G_{FK},F^{\times}) \xrightarrow{\iota_{*}}
    H^2(G_{FK},\overline{\QQ}^{\times})
    $$
    is exactely the class of
    $[X_{FK}(\alpha)]$ in $H^2(G_{FK}, \overline{\QQ})
    =Br(FK)$
\end{cor}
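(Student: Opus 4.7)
The plan is to compute the connecting homomorphism $\delta : H^1(G_{FK}, \mathrm{PGL}_n(\overline{\QQ})) \to H^2(G_{FK}, \overline{\QQ}^\times)$ explicitly on the $\mathrm{PGL}_n$-valued 1-cocycle coming from $\alpha_{FK}$, match the result with $\iota_*(\mathrm{res}[c_\alpha])$, and then invoke the identification from the paragraph preceding the corollary (namely $[\mathrm{End}_{FK}L(\alpha_{FK})] = \delta([\alpha_{FK}])$) together with the identity $[X_{FK}(\alpha)] = [X_{FK}] + [\mathrm{End}_{FK}L(\alpha_{FK})]$ from the proof of Lemma \ref{Schur_index}.

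Concretely, first I would fix an $FK$-basis $e_1, \dots, e_n$ of $L$ and take the natural lift of the $\mathrm{PGL}_n$-cocycle $\sigma \mapsto [m_{\alpha(\sigma)}]$ to
$$
\tilde f(\sigma) := m_{\alpha(\sigma)} \in \mathrm{GL}_n(\overline{\QQ}),
$$
the matrix of multiplication-by-$\alpha(\sigma)$ on $L$ in the chosen basis. The crucial observation is that $\tilde f(\sigma)$ actually lies in $\mathrm{GL}_n(FK)$: although $\alpha(\sigma) \in E$ need not lie in $FK$, the endomorphism $m_{\alpha(\sigma)}$ is $FK$-linear because $L$ is a commutative $FK$-algebra, so its matrix has entries in $FK$. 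Since $G_{FK}$ acts trivially on $FK$, one obtains ${}^\sigma \tilde f(\tau) = \tilde f(\tau)$ for all $\sigma, \tau \in G_{FK}$, and the explicit formula for the connecting map gives
$$
\delta([\alpha_{FK}])(\sigma, \tau) \;=\; \tilde f(\sigma)\cdot {}^\sigma\tilde f(\tau) \cdot \tilde f(\sigma\tau)^{-1} \;=\; m_{\alpha(\sigma)\alpha(\tau)\alpha(\sigma\tau)^{-1}} \;=\; m_{c_\alpha(\sigma,\tau)}.
$$
Because $c_\alpha(\sigma,\tau) \in F^\times$, the endomorphism $m_{c_\alpha(\sigma,\tau)}$ is the scalar matrix $c_\alpha(\sigma,\tau) \cdot I_n$, corresponding to the element $c_\alpha(\sigma,\tau) \in F^\times \hookrightarrow \overline{\QQ}^\times$ in the center. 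This identifies $\delta([\alpha_{FK}])$ with $\iota_*(\mathrm{res}[c_\alpha])$ in $H^2(G_{FK}, \overline{\QQ}^\times)$, and the desired statement follows.

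The main obstacle is careful bookkeeping of the Galois actions on several different coefficient groups. One must check that the inclusion $F^\times \hookrightarrow \overline{\QQ}^\times$ is $G_{FK}$-equivariant when $F^\times$ carries the trivial action used to define $[c_\alpha]$; this holds because $G_{FK}$ fixes $F$ pointwise since $F \subset FK$. The more delicate point, and the real content of the computation, is confirming that the natural matrix lift $\tilde f(\sigma)$ has entries in $FK$ rather than merely in $EK$; if this failed, the Galois action ${}^\sigma \tilde f(\tau)$ would introduce an unwanted twist by ${}^\sigma\alpha(\tau)$ and the cocycle produced by $\delta$ would no longer be the raw $c_\alpha$ but a Galois-deformed version of it, and the identification with the image of $[c_\alpha]$ via $\res \circ \iota_*$ would break.
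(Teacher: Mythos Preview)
Your computation of the connecting map $\delta$ is correct and is exactly the argument the paper sketches in the paragraph preceding the corollary: lift $\alpha_{FK}$ to the multiplication matrices $m_{\alpha(\sigma)}$, observe that these matrices have entries in $FK$ (so the Galois action on them is trivial), and read off $\delta([\alpha_{FK}])(\sigma,\tau)=c_\alpha(\sigma,\tau)$ as a scalar in $F^\times\subset\overline{\QQ}^\times$. That part matches the paper precisely.

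The problem is the final step. What your computation establishes is
\[
\iota_*(\res[c_\alpha]) \;=\; \delta([\alpha_{FK}]) \;=\; [\mathrm{End}_{FK}L(\alpha_{FK})],
\]
and you then propose to combine this with the identity $[X_{FK}(\alpha)]=[X_{FK}]+[\mathrm{End}_{FK}L(\alpha_{FK})]$ to conclude. But that combination yields
\[
[X_{FK}(\alpha)] \;=\; [X_{FK}] + \iota_*(\res[c_\alpha]),
\]
which is \emph{not} the statement of the corollary as written; it is the \emph{next} corollary (Corollary~\ref{cocycle}). To get the corollary as literally stated you would need $[X_{FK}]=0$ in $\mathrm{Br}(FK)$, which is not asserted and in general false.

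In other words, the corollary as printed almost certainly contains a typo: the conclusion should read $[\mathrm{End}_{FK}L(\alpha_{FK})]$ rather than $[X_{FK}(\alpha)]$. This is confirmed by the very next corollary, which would be vacuous (asserting $[X_{FK}]=0$) if the present one were taken literally. Your argument is a correct and complete proof of the intended statement, and your invocation of the additive identity is exactly how one then passes to Corollary~\ref{cocycle}; you should just be explicit that the printed target $[X_{FK}(\alpha)]$ is an error and that what is actually proved (and needed) is $\iota_*(\res[c_\alpha])=[\mathrm{End}_{FK}L(\alpha_{FK})]$.
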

\begin{cor}\label{cocycle}
    In $Br(FK)$ one has:
    $$
    [X_{FK}(\alpha)] = [X_{FK}] + \iota_{*}(\res([c_\alpha]))
    $$
\end{cor}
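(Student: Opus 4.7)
The plan is to treat this corollary as a direct bookkeeping combination of the two displayed identities that have already been established earlier in the section, so the work is really just tracing through the two ingredients and recording the sum.

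First I would recall the decomposition established right after the reference to \cite[Proposition 2.4]{chi1987twists}, namely
$$
[X_{FK}(\alpha)] = [X_{FK}] + [\mathrm{End}_{FK} L (\alpha_{FK})]
$$
in $\mathrm{Br}(FK)$, where $L$ is a maximal subfield of $X_{FK}$ containing $EK$. This is the algebraic input that expresses the Brauer class of the twisted algebra as a perturbation of $[X_{FK}]$ by the class of an endomorphism algebra twisted by the cocycle $\alpha_{FK}$.

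Next I would invoke the preceding corollary, which identifies the Brauer class $[\mathrm{End}_{FK} L (\alpha_{FK})]$ via the connecting homomorphism in non-abelian cohomology with the image of $[c_\alpha]$ under
$$
H^2(G_K,F^{\times}) \xrightarrow{\res} H^2(G_{FK},F^{\times}) \xrightarrow{\iota_{*}} H^2(G_{FK},\overline{\QQ}^{\times}) = \mathrm{Br}(FK).
$$
Concretely, this uses that viewing each $\alpha(\sigma) \in E \subset L$ as an $FK$-linear endomorphism of $L$ yields a $1$-cocycle in $PGL_n(FK) \subset PGL_n(\overline{\QQ})$, whose image under the connecting map $\delta$ is precisely $\iota_{*}(\res([c_\alpha]))$.

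Substituting the second equality into the first gives the claim. Since both ingredients are already in hand, there is no genuine obstacle here; the only mild bookkeeping point worth being careful about is that $[c_\alpha]$ a priori lives in $H^2(G_K, F^\times)$ (so one needs to push it through both $\res$ and $\iota_*$ to land in $\mathrm{Br}(FK)$), but this is built into the statement. The content of the corollary is therefore essentially a restatement of what has just been proved, packaged in the form that will be convenient for the Brauer-class computation in section \ref{BrClass}.
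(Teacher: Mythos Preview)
Your proposal is correct and matches the paper's approach exactly: the paper states this corollary without any explicit proof, treating it as the immediate substitution of the preceding corollary (which identifies $[\mathrm{End}_{FK} L(\alpha_{FK})]$ with $\iota_{*}(\res([c_\alpha]))$) into the earlier identity $[X_{FK}(\alpha)] = [X_{FK}] + [\mathrm{End}_{FK} L(\alpha_{FK})]$ obtained from Chi's Proposition~2.4. Your bookkeeping is precisely what the paper intends.
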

Our next goal is to prove that $X_{FK}(\alpha)$ is trivial
in the Brauer group. The main ingredient is the next proposition.
\begin{prop}
    $X_{FK}(\alpha)$ acts (linearly) on $\Omega^1_{K}$.
\end{prop}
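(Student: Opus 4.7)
The plan is to construct the action on $\Omega^1_{\overline{\QQ}}$ first and then descend to $\Omega^1_K$ by taking $G_K$-invariants. The natural pullback action of $X$ on $\Omega^1_{\overline{\QQ}}$ extends to an action of $X_{FK} \otimes_K \overline{\QQ}$ on $\Omega^1_{\overline{\QQ}}$ where elements of $X$ act by pullback while the factor $FK \otimes_K \overline{\QQ}$ acts by scalar multiplication through $\overline{\QQ}$. Concretely, writing a pure tensor as $\phi \otimes k \otimes \lambda$ with $\phi \in X$, $k \in K$, $\lambda \in \overline{\QQ}$ (absorbing any $F$-part of $FK$ into $X$ via the relation $\phi f \otimes k = \phi \otimes fk$), the action on $\omega \in \Omega^1_{\overline{\QQ}}$ is simply $\lambda k \phi^{*}\omega$.

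The key identity is that equation (\ref{X_action}) translates on pure tensors into Galois equivariance between the twisted $G_K$-action $tw$ on $X_{FK} \otimes_K \overline{\QQ}$ and the standard $G_K$-action on $\Omega^1_{\overline{\QQ}}$:
$$\sigma\bigl((\phi \otimes k \otimes \lambda) \cdot \omega\bigr) = \sigma(\lambda)\, k \bigl(\alpha(\sigma)\phi\alpha(\sigma)^{-1}\bigr)^{*} \sigma(\omega) = \bigl(tw(\sigma)(\phi \otimes k \otimes \lambda)\bigr) \cdot \sigma(\omega),$$
using $\sigma(k) = k$ for $k \in K$ and applying (\ref{X_action}) to the pullback. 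Taking $G_K$-invariants on both sides then yields an action of $X_{FK}(\alpha) = (X_{FK} \otimes_K \overline{\QQ})^{tw(G_K)}$ on $\Omega^1_K = (\Omega^1_{\overline{\QQ}})^{G_K}$, and $FK$-linearity is automatic since $FK \subset X_{FK}(\alpha)$ acts through the Galois-invariant scalar structure.

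I expect the only delicate point to be the well-definedness of the first step: one must check that an element $f \in F$ produces the same endomorphism of $\Omega^1_{\overline{\QQ}}$ whether it is absorbed into the left factor and acts by pullback (as $F \subset X$), or it is pushed into the middle factor and eventually acts as a scalar via $FK \otimes_K \overline{\QQ}$. This comes down to comparing the decomposition of $\Omega^1_{\overline{\QQ}}$ into $F$-isotypic components (under pullback, indexed by embeddings $F \hookrightarrow \overline{\QQ}$) with the idempotent decomposition $FK \otimes_K \overline{\QQ} \cong \prod_\psi \overline{\QQ}$ indexed by embeddings $\psi \colon FK \hookrightarrow \overline{\QQ}$ fixing $K$. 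Once this bookkeeping is in place, the equivariance computation above is immediate from (\ref{X_action}), and the descent to $\Omega^1_K$ is a formal consequence of Galois descent for $\overline{\QQ}$-vector spaces.
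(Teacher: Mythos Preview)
Your proposal is correct and follows essentially the same route as the paper: define the action of $X_{FK}\otimes_K\overline{\QQ}$ on $\Omega^1_{\overline{\QQ}}$ by the formula $(\phi\otimes k\otimes\lambda)\cdot\omega = k\lambda\,\phi^{*}\omega$, verify via (\ref{X_action}) the equivariance $\sigma(\psi\cdot\omega)=(tw(\sigma)\psi)\cdot\sigma\omega$, and pass to $G_K$-invariants. The only difference is that you flag the well-definedness issue for the tensor relation over $F$ (pullback action of $f\in F\subset X$ versus scalar action through $FK\otimes_K\overline{\QQ}$), which the paper passes over in silence; this is a fair point to raise but does not change the argument.
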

\begin{proof}
    First, we defined an action of $X_{FK}\otimes \overline{\QQ}$ on 
    $\Omega^1_{\overline{\QQ}}$ by extending the action of
    $X$ linearly, namely we define:
    $$
    (\phi \otimes k \otimes \lambda)^{*}\omega :=
    k\lambda \phi^{*}\omega
    $$
    for $\phi \in X$, $k \in K$ and $\lambda \in \overline{\QQ}$. 
    Now using 
    (\ref{X_action}) one easily sees that for any
    $\sigma \in G_K$ and $\psi \in X_{FK}\otimes \overline{\QQ}$:
    $$
    \sigma(\psi^{*}\omega) = (tw(\sigma)\cdot \psi)^{*}\sigma\omega
    $$
    which means that if $\psi$ is invariant under the twisted
    Galois action and $\omega$ is invariant under the usual
    Galois action, then $\psi^{*}\omega$ is also invariant.
    This means that $X_{FK}(\alpha)$ acts on $\Omega^1_{K}$.
\end{proof}
\begin{prop}\label{twist_trivial}
    $X_{FK}(\alpha) \in Br(FK)$ is trivial.
\end{prop}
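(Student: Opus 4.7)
The plan is to combine the action of $X_{FK}(\alpha)$ on $\Omega^1_K$ established in the previous proposition with the Schur-index bound from Lemma~\ref{Schur_index}, and then use the oddness of $[K:\QQ]$ to force splitness.

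First I would note that $X_{FK}(\alpha)$ is a central simple $FK$-algebra whose action on $\Omega^1_K$ is nonzero (the identity acts as the identity), so by simplicity the action is faithful and we obtain an embedding $X_{FK}(\alpha)\hookrightarrow \mathrm{End}_{FK}(\Omega^1_K)$. By the structure theory of central simple algebras, if $s$ denotes the Schur index of $X_{FK}(\alpha)$, then $\Omega^1_K$ is isomorphic as an $X_{FK}(\alpha)$-module to a direct sum of $m$ copies of the unique simple module, which has $FK$-dimension $s[E:F]$. Hence $\dim_{FK}\Omega^1_K = m s [E:F]$.

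Next I would compute $\dim_{FK}\Omega^1_K$ independently. Its $K$-dimension equals $\dim A_f = [E:\QQ]$, and the $FK$-action coming from $X_{FK}(\alpha)$ restricts on $K\subset FK$ to the natural scalar action on $\Omega^1_K$, as one sees directly from the formula $(\phi\otimes k \otimes\lambda)^{*}\omega = k\lambda\phi^{*}\omega$ by specializing to $\phi=1$ and $\lambda=1$ (noting also that $1\otimes k\otimes 1$ is fixed by the twisted Galois action). This gives $\dim_{FK}\Omega^1_K = [E:\QQ]/[FK:K]$, and comparing with the previous expression produces $m s = [F:\QQ]/[FK:K]$.

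Finally, using the tower identity $[FK:F]\,[F:\QQ] = [FK:K]\,[K:\QQ]$, one rewrites $m s = [K:\QQ]/[FK:F]$, which is a positive integer dividing $[K:\QQ]$. Since $[K:\QQ]$ is odd by assumption, $m s$ is odd, so combined with $s\mid 2$ from Lemma~\ref{Schur_index} we must have $s=1$, meaning $X_{FK}(\alpha)$ is split in $\mathrm{Br}(FK)$. The main subtle point is working over $FK$ rather than $F$: in Ribet's $K=\QQ$ case the field $FK$ coincides with $F$ and the Schur-index bound suffices on its own, but in the Hilbert case the oddness hypothesis on $[K:\QQ]$ is precisely what one needs to absorb the residual integer factor $[K:\QQ]/[FK:F]$.
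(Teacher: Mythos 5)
Your proof is correct and takes essentially the same route as the paper: both exploit the module structure of $\Omega^1_K$ over $X_{FK}(\alpha)\simeq M_n(D)$, the count $\dim_K\Omega^1_K=[E:\QQ]$, and the oddness of $[K:\QQ]$ combined with the bound $s\mid 2$ from Lemma~\ref{Schur_index} to force $s=1$. The only difference is bookkeeping, and it is a welcome one: you arrive at $ms=[K:\QQ]/[FK:F]$ purely from the tower formula $[FK:F][F:\QQ]=[FK:K][K:\QQ]$, whereas the paper writes the same quantity as $[F\cap K:\QQ]$ via the identity $[FK:K]=[F:F\cap K]$, which strictly speaking requires a linear-disjointness justification that your version sidesteps.
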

\begin{proof}
    Let $X_{FK}(\alpha)=M_n(D)$ for some division algebra $D$
    over $FK$ of dimension $s^2$. By corollary \ref{Schur_index}
    one has $s|2$. Now $\dim X_{FK}(\alpha)=n^2s^2$ which should
    be equal to the dimension of $X$ over $F$ therefore 
    $ns=[E:F]$. By the last proposition $\Omega^1_K$ is a
    $M_n(D)$-module. So there is a $D$-vector space $W$
    such that $\Omega^1_K \simeq W^n$. The dimension of
    $\Omega^1_K$ over $K$ is equal to the dimension of the
    abelian variety $A_f$ which is $[E:\QQ]$. Hence
    $$
    s^2=\dim_{FK}D|\dim_{FK} W = \frac{[E:\QQ]}{n[FK:K]}
    =\frac{ns[F:\QQ]}{n[F:F\cap K]}=s[F\cap K:\QQ]
    $$
    This implies $s|[F\cap K:\QQ]$ but since $s|2$ and 
    $[K:\QQ]$ is odd, one has $s=1$.
\end{proof}
From proposition \ref{twist_trivial} and corollary \ref{cocycle}
and the fact that $[X_{FK}]\in Br(FK)$ has order dividing 2,
one deduces:
\begin{cor}
    In $Br(FK)$ one has
    $$
    [X_{FK}] = \iota_{*}(\res([c_\alpha]))
    $$
\end{cor}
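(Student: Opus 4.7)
My plan is to deduce the identity directly from the three ingredients cited in the sentence preceding the corollary: Corollary \ref{cocycle}, Proposition \ref{twist_trivial}, and the fact that $[X_{FK}]$ is 2-torsion in $Br(FK)$. Corollary \ref{cocycle} gives the additive decomposition
$$
[X_{FK}(\alpha)] = [X_{FK}] + \iota_{*}(\res([c_\alpha]))
$$
in $Br(FK)$. Substituting the conclusion of Proposition \ref{twist_trivial}, namely $[X_{FK}(\alpha)] = 0$, I immediately obtain
$$
[X_{FK}] + \iota_{*}(\res([c_\alpha])) = 0,
$$
so $\iota_{*}(\res([c_\alpha])) = -[X_{FK}]$ in the Brauer group.

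The one subtlety is the sign: the statement asserts equality with $+\iota_{*}(\res([c_\alpha]))$. To fix the sign, I would invoke \cite[Proposition B.4.12]{nekovavr2012level}, which guarantees that $X$ (and therefore $X_{FK}$) has Schur index dividing 2, so that $2[X_{FK}] = 0$ in $Br(FK)$. In any abelian group an element $a$ with $2a=0$ satisfies $a = -a$, so $-[X_{FK}] = [X_{FK}]$, and the previous display rearranges to the desired identity $[X_{FK}] = \iota_{*}(\res([c_\alpha]))$.

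There is no genuine obstacle left at this stage: the corollary is essentially a two-line bookkeeping step once the preceding results are in place. The substantive content was already absorbed into Proposition \ref{twist_trivial}, whose proof exploits the action of $X_{FK}(\alpha)$ on $\Omega^1_K$ together with the hypothesis that $[K:\QQ]$ is odd to force the Schur index of the twist to equal $1$, and into the explicit cocycle identification in Corollary \ref{cocycle}.
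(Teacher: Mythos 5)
Your argument is exactly the paper's: substitute the triviality of $[X_{FK}(\alpha)]$ from Proposition \ref{twist_trivial} into the decomposition of Corollary \ref{cocycle}, and use that $[X_{FK}]$ is 2-torsion to dispose of the sign. This matches the paper's deduction, so the proposal is correct and takes the same route.
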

Now we need to go down from $FK$ to $F$ to compute
the class $[X]$ in $Br(F)$ using $\alpha$. 
We can use the corestriction map to do so. First note that by
the last corollary we know that following the below diagram,
the image of
$[c_\alpha]$ in $H^2(G_{FK},\overline{F}^*)$
is $[X_{FK}]$ which is the image of $[X]$ under the
restriction. 

\begin{center}
\begin{tikzcd}
{[c_\alpha]\in H^2(G_K,F^*)} \arrow[rr, "\res"]      &  & {H^2(G_{FK},F^*)} \arrow[dd, "\iota_{*}"]                             \\
                                                    &  &                                                                       \\
{[X]\in H^2(G_F,\overline{F}^*)} \arrow[rr, "\res"'] &  & {  H^2(G_{FK},\overline{F}^*)} \arrow[ll, "\cores"', dotted, bend right]
\end{tikzcd}
\end{center}
This means that 
$$
\iota_{*}(\res([c_\alpha])) = \res([X])
$$
On the other hand, $\cores \circ \res = [FK:F]=[K:F\cap K]$
which is an odd integer.
Since $X$ has order dividing 2 in the Brauer group,
$\cores(\res([X]))=X$. 

Finally, we can conclude the generalization of \cite[Theorem 5.6]{ribet1992abelian} to the case of Hilbert modular form (with trivial central character): 
\begin{cor}\label{corRes}
    In $Br(F)$ one has
    $$
    [X]=\cores(\iota_{*}(\res([c_\alpha])))
    $$
\end{cor}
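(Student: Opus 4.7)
The plan is to combine the identity that has just been established, namely $\iota_{*}(\res([c_\alpha])) = \res([X])$ in $\mathrm{Br}(FK)$, with the standard behaviour of the composition $\cores\circ\res$ on Brauer groups. Concretely, I would first apply $\cores$ to both sides of this identity, which immediately gives
$$
\cores(\iota_{*}(\res([c_\alpha]))) = \cores(\res([X]))
$$
in $\mathrm{Br}(F)$. The remaining task is just to show that the right-hand side equals $[X]$.

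For that, I would invoke the well-known formula $\cores\circ\res = [FK:F]$ acting as multiplication by the index on $H^{2}(G_{F}, \overline{F}^{\times})$. So $\cores(\res([X])) = [FK:F]\cdot[X]$ in $\mathrm{Br}(F)$. Now I need two numerical facts, both already recorded in the excerpt: (i) $[X]$ has order dividing $2$ in $\mathrm{Br}(F)$ by \cite[Proposition B.4.12]{nekovavr2012level}, and (ii) $[FK:F]$ divides $[K:\QQ]$, which is odd by the standing hypothesis on $K$. Therefore $[FK:F]$ is odd, and multiplication by an odd integer is the identity on the $2$-torsion of $\mathrm{Br}(F)$; in particular $[FK:F]\cdot[X] = [X]$.

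Putting these two steps together yields $[X] = \cores(\iota_{*}(\res([c_\alpha])))$, as desired. There is no real obstacle here: the serious work has already been done to prove $\iota_{*}(\res([c_\alpha])) = \res([X])$ (via the triviality of $X_{FK}(\alpha)$ in Proposition \ref{twist_trivial}, which itself used the odd-degree hypothesis on $K$ together with the Schur index bound). The odd-degree hypothesis on $[K:\QQ]$ is used a second time at exactly this corestriction step to kill the factor $[FK:F]$ against the $2$-torsion class $[X]$, so the proof is essentially a two-line consequence of the preceding discussion.
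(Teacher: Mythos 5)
Your proposal is correct and follows essentially the same route as the paper: apply $\cores$ to the identity $\iota_{*}(\res([c_\alpha]))=\res([X])$, use $\cores\circ\res=[FK:F]=[K:F\cap K]$, and kill this odd factor against the $2$-torsion class $[X]$ (Schur index dividing $2$). Nothing is missing.
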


\section{Computing the Brauer Class}\label{BrClass}

Now we have all the ingredients to generalize
\cite{quer1998classe}. The proof is essentially the same. First notice that from theorem 
\ref{alpha2trivial} (and the assumption $\epsilon=1$) we know that $\alpha^2$ is
trivial, hence
$$
\alpha^2 : G_K \rightarrow F^{\times}/(F^{\times})^2
$$
is a homomorphism. Let $N$ be the finite Galois
extension of $K$ associated with its kernel, i.e.
$\mathrm{ker}(\alpha^2)=G_N$. Since
$\mathrm{Gal}(N/K)\simeq \mathrm{Im}(\alpha^2)
\subset F^{\times}/(F^{\times})^2$
is a 2-torsion group, one has $\mathrm{Gal}(N/K)\simeq (\ZZ/2\ZZ)^m$ for some positive integer
$m$. Therefore, $N=K(\sqrt{t_1},...,\sqrt{t_m})$
for some $t_i \in K$
and if one defines $\sigma_i \in \mathrm{Gal}(N/K)$ 
with the relations
$$
\sigma_i (\sqrt{t_j}) = (-1)^{\delta_{i,j}}\sqrt{t_j}
$$
then
$\sigma_1,...,\sigma_m$ form an $\FF_2$-basis for
$\mathrm{Gal}(N/K)$. 
\begin{lemma}\label{HilbertSym}
    In $\mathrm{Br}(FK)$ on has:
    $$
    \iota_{*} (\mathrm{res}([c_{\alpha}]))
    = (t_1,\alpha(\sigma_1)^2)
    (t_2,\alpha(\sigma_2)^2)
    \cdots 
    (t_m,\alpha(\sigma_m)^2)
    $$
    where $(a,b)=(a,b)_{FK}$ denotes the Hilbert symbol.
\end{lemma}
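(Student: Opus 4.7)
The plan is to compute $c_\alpha$ explicitly as a $2$-cocycle on $G_K$ using an adapted set-theoretic lift of $\alpha$, and then recognize each factor, after restriction to $G_{FK}$, as the standard cocycle representative of a Hilbert symbol.

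First I would observe that $\alpha$ itself factors through $\mathrm{Gal}(N/K)$: by Theorem~\ref{alpha2trivial} (and $\epsilon = 1$), any lift $a \in E^\times$ of $\alpha(\sigma)$ satisfies $a^2 \in F^\times$, and $a^2 \in (F^\times)^2$ forces $a = \pm f \in F^\times$, hence $\alpha(\sigma)$ is trivial. So $\ker(\alpha) = \ker(\alpha^2) = G_N$. Fix lifts $\alpha_i \in E^\times$ of $\alpha(\sigma_i)$ with $\alpha_i^2 \in F^\times$, and define a section $\widetilde\alpha: \mathrm{Gal}(N/K) \to E^\times$ by
$$\widetilde\alpha(\sigma_1^{e_1} \cdots \sigma_m^{e_m}) := \alpha_1^{e_1} \cdots \alpha_m^{e_m}, \qquad e_i \in \{0, 1\},$$
which we view as a lift $G_K \to E^\times$ of $\alpha$ by inflation.

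Using the commutativity of $E^\times$ and the congruence $e_i(\sigma\tau) \equiv e_i(\sigma) + e_i(\tau) \pmod{2}$ with $e_i \in \{0,1\}$, the integer $e_i(\sigma) + e_i(\tau) - e_i(\sigma\tau)$ equals $2$ precisely when $e_i(\sigma) = e_i(\tau) = 1$ and is $0$ otherwise. This gives the clean factorization
$$c_\alpha(\sigma, \tau) = \widetilde\alpha(\sigma)\widetilde\alpha(\tau)\widetilde\alpha(\sigma\tau)^{-1} = \prod_{i=1}^{m} (\alpha_i^2)^{\chi_i(\sigma)\chi_i(\tau)},$$
where $\chi_i: G_K \to \{0,1\}$ is the quadratic character of $K(\sqrt{t_i})/K$, dual to $\sigma_i$, inflated from $\mathrm{Gal}(N/K)$.

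Finally I would identify each factor. After restriction to $G_{FK}$ and the inclusion $F^\times \hookrightarrow \overline{F}^\times$ (which is $G_{FK}$-equivariant because $F \subset FK$), $\chi_i|_{G_{FK}}$ is exactly the quadratic character of $FK(\sqrt{t_i})/FK$, and the cocycle $(g,h) \mapsto (\alpha_i^2)^{\chi_i(g)\chi_i(h)}$ is the standard inflation from $\mathrm{Gal}(FK(\sqrt{t_i})/FK)$ of the fundamental class of the cyclic algebra $(FK(\sqrt{t_i})/FK, \alpha_i^2)$, i.e.\ of the Hilbert symbol $(t_i, \alpha(\sigma_i)^2)_{FK}$. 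Multiplying over $i$ in $\mathrm{Br}(FK)$ yields the claimed identity.

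The main obstacle is matching the cocycle conventions for the Hilbert symbol precisely; the combinatorial factorization in the commutative group $E^\times$ is straightforward. The identity also remains consistent in the degenerate case when some $t_i$ becomes a square in $FK$: then $\chi_i|_{G_{FK}}$ is trivial, and both the corresponding factor in the cocycle and the $i$-th Hilbert symbol vanish.
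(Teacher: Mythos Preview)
Your argument is correct and reaches the same factorization as the paper, but the route is slightly different. The paper first replaces $c_\alpha$ in $\mathrm{Br}(FK)$ by the cohomologous cocycle $(\sigma,\tau)\mapsto \alpha(\tau)/\sigma(\alpha(\tau))$ (using the nontrivial Galois action on $\overline{\QQ}^\times$), introduces the auxiliary quadratic characters $y_i$ defined by $\sigma(\alpha(\sigma_i))=(-1)^{y_i(\sigma)}\alpha(\sigma_i)$, and ends up with the $\mu_2$-valued cocycle $\prod_i(-1)^{y_i(\sigma)x_i(\tau)}$, which is then recognized as a cup product computing $(t_i,\alpha(\sigma_i)^2)$. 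You instead stay entirely on the trivial-action side: after observing $\ker\alpha=\ker\alpha^2$, your choice of section $\widetilde\alpha$ produces directly the $F^\times$-valued cocycle $\prod_i(\alpha_i^2)^{\chi_i(\sigma)\chi_i(\tau)}$, which is already the standard crossed-product cocycle for the cyclic algebra $(FK(\sqrt{t_i})/FK,\alpha_i^2)$. The two computations are of course equivalent, but yours avoids the passage to the alternate cocycle representative and the extra characters $y_i$; the paper's version, on the other hand, makes the symmetry between the two Kummer classes (of $t_i$ and of $\alpha(\sigma_i)^2$) more visibly a cup product in $\mu_2$-cohomology.
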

\begin{proof}
    First notice that since
    $\alpha(\sigma)\sigma(\alpha(\tau))\alpha(\sigma\tau)^{-1}$
    is a coboundary, the 2-cocycle
    $[c_\alpha]$ is also given by the formula
    $(\sigma,\tau)\mapsto \frac{\alpha(\tau)}{\sigma(\alpha(\tau))}$.
    
    For each $\tau \in G_K$ let 
    $$\tau(\sqrt{t_i})=(-1)^{x_i(\tau)}\sqrt{t_i}$$
    Then $x_i:G_K \rightarrow \ZZ/2\ZZ$
    is clearly a group homomorphism. Similarly
    let $y_i:G_{FK}\rightarrow \ZZ/2\ZZ$
    be the homomorphism given by
    $$
    \sigma(\alpha(\sigma_i))=(-1)^{y_i(\sigma)}\alpha(\sigma_i)
    $$
    for $\sigma \in G_{FK}$.
    Now since $\{ \sigma_i \}_{i=1}^m$ provides 
    an $\FF_2$ basis for $\mathrm{Gal}(N/K)$,
    every element $\tau \in G_K$ can be
    written as $\eta \prod_{i=1}^m \sigma_i^{x_i(\tau)}$
    where $\eta$ is in $G_N=\mathrm{ker}(\alpha^2)$.
    Applying $\alpha^2$ to both sides one
    gets
    $$
    \alpha^2(\tau) \equiv \prod_{i=1}^m \alpha^2(\sigma_i)^{x_i(\tau)} \:\:
    \mathrm{(mod} \: F^{\times 2}\mathrm{)}
    $$
    which implies 
    $$
    \alpha(\tau) = \lambda\prod_{i=1}^m \alpha(\sigma_i)^{x_i(\tau)}
    $$
    For some $\lambda \in F^{\times}$.
    Now one can use this to give a description 
    of $[c_\alpha]$. Applying $\sigma \in G_{FK}$ to the
    both sides one has
    $$
    \sigma(\alpha(\tau))
    =\lambda \prod_{i=1}^m \sigma(\alpha(\sigma_i))^{x_i(\tau)}
    =\lambda\prod_{i=1}^m (-1)^{y_i(\sigma)x_i(\tau)}\alpha(\sigma_i)^{x_i(\tau)}
    =\alpha(\tau)\prod_{i=1}^m (-1)^{y_i(\sigma)x_i(\tau)}$$
    which gives the description
    $$
    \prod_{i=1}^m (-1)^{y_i(\sigma)x_i(\tau)}
    $$
    for $\iota_{*} (\mathrm{res}([c_{\alpha}]))$
    in $Br(FK)$.
    Now, 
    it is well-known
    that the 2-cocycle $(-1)^{y_i(\sigma)x_i(\tau)}$
    is represented by the
    Hilbert symbol $(t_i,\alpha^2(\sigma_i))$
    so we are done.
\end{proof}

From \cite{nekovavr2012level} we know that $\Gamma \simeq \mathrm{Gal}(E/F)$ is the group of inner-twists 
of the form $f$. Namely, for each $\sigma \in \mathrm{Gal}(E/F)$ there exist a unique
character $\chi_{\sigma}:G_K \rightarrow \CC^{\times}$ such that
$\chi_\sigma \otimes f = {}^\sigma f$. This is
equivalent to saying that for every finite
place $\fp$ of $K$ not dividing
$N$ one has 
$$\chi_{\sigma}(\mathrm{Frob}_\fp)\cdot a_\fp = \sigma(a_\fp)$$
where $a_\fp$ is the $\fp$'th Fourier coefficient
(Hecke eigenvalue) of $f$. 

\begin{lemma}
    The characters $\chi_{\sigma}$ appearing
    in the inner-twists are exactly characters
    of $G_K$ that factor through 
    $\mathrm{Gal}(N/K)$. In partcular, the
    number of the inner-twists of $f$ is $2^m$.
\end{lemma}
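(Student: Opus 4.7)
The plan is to establish both halves simultaneously by producing an explicit injection $\mathrm{Gal}(E/F) \hookrightarrow \mathrm{Hom}(\mathrm{Gal}(N/K),\CC^{\times})$ via $\sigma \mapsto \chi_\sigma$, then counting to see both sides have size $2^m$. The first step is to recognize $\chi_\sigma$ as the explicit cocycle ratio $\chi_\sigma(g) = \sigma(\alpha(g))/\alpha(g)$. Specializing the pair of embeddings in the proof of Theorem \ref{alpha2trivial} to $(\iota \circ \sigma, \iota)$ (where $\iota:E \hookrightarrow \QQ_\ell$ is fixed and $\sigma \in \mathrm{Gal}(E/F)$), one obtains a character $\phi: G_K \to \QQ_\ell^{\times}$ with $\phi(\mathrm{Frob}_\fp) = \sigma(a_\fp)/a_\fp$ whenever $a_\fp \neq 0$; by Chebotarev density this agrees with $\chi_\sigma$. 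The same computation also shows $\phi(g) = \sigma(\alpha(g))/\alpha(g)$ for any lift of $\alpha(g)$ to $E^{\times}$, a ratio which is well-defined since $\sigma$ fixes $F^{\times}$.

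Next I would show $\chi_\sigma$ factors through $\mathrm{Gal}(N/K)$. If $g \in G_N = \ker(\alpha^2)$, then any lift $\tilde{\alpha}(g) \in E^{\times}$ has $\tilde{\alpha}(g)^2 \in (F^{\times})^2$; writing $\tilde{\alpha}(g)^2 = u^2$ with $u \in F^{\times}$ forces $\tilde{\alpha}(g) = \pm u \in F^{\times}$, so $\sigma(\tilde{\alpha}(g))/\tilde{\alpha}(g) = 1$. Injectivity of $\sigma \mapsto \chi_\sigma$ is immediate from the defining relation $\chi_\sigma(\mathrm{Frob}_\fp) \cdot a_\fp = \sigma(a_\fp)$, since $E$ is generated over $\QQ$ by the $a_\fp$'s (restricting to the cofinite set where $a_\fp \neq 0$).

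The main task is to show $|\mathrm{Gal}(E/F)| = 2^m$. By Proposition \ref{CLM} we have $F = \QQ(a_\fp^2)$, so $E/F$ is a multi-quadratic Kummer extension, and it suffices to prove $E = F(\sqrt{t_1},\ldots,\sqrt{t_m})$. For the inclusion $\supset$, choose any lift $\tilde{\alpha}(\sigma_i) \in E^{\times}$: its square equals $t_i$ modulo $(F^{\times})^2$, so $\sqrt{t_i} \in E$ up to an element of $F^{\times}$; the $t_i$ are independent in $F^{\times}/(F^{\times})^2$ since $\mathrm{Im}(\alpha^2)$ has $2^m$ elements, giving $[F(\sqrt{t_1},\ldots,\sqrt{t_m}):F] = 2^m$. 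For the inclusion $\subset$, the expansion $\alpha(\tau) = \lambda \prod_i \alpha(\sigma_i)^{x_i(\tau)}$ from the proof of Lemma \ref{HilbertSym} (applied to $\tau = \mathrm{Frob}_\fp$) places $\alpha(\mathrm{Frob}_\fp)$ inside $F(\sqrt{t_1},\ldots,\sqrt{t_m})$, and the congruence $a_\fp \equiv \alpha(\mathrm{Frob}_\fp) \pmod{F^{\times}}$ of Theorem \ref{alpha2trivial} puts $a_\fp$ there too. Once $|\mathrm{Gal}(E/F)| = 2^m$ is in hand, the injection $\sigma \mapsto \chi_\sigma$ into $\mathrm{Hom}(\mathrm{Gal}(N/K),\CC^{\times})$, which also has cardinality $2^m$, is forced to be a bijection, yielding both assertions. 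The principal subtlety is keeping the $F^{\times}$ and $(F^{\times})^2$ indeterminacies in $\alpha$ consistent throughout the argument so that all formulas are genuinely well-defined.
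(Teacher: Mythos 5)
The first half of your argument (the identity $\chi_\sigma(g)=\sigma(\alpha(g))/\alpha(g)$ extracted from the proof of Theorem \ref{alpha2trivial}, the resulting factorization through $\mathrm{Gal}(N/K)$ since $g\in\ker(\alpha^2)$ forces any lift of $\alpha(g)$ into $F^\times$, and the injectivity of $\sigma\mapsto\chi_\sigma$) is sound, and is in fact cleaner than the paper's first step. But the central counting step is broken. You assert that a lift $\tilde\alpha(\sigma_i)$ satisfies $\tilde\alpha(\sigma_i)^2\equiv t_i \pmod{(F^\times)^2}$ and hence that $E=F(\sqrt{t_1},\dots,\sqrt{t_m})$. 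Nothing in the paper gives such a relation, and it is false in general: the $t_i$ are Kummer generators of $N/K$ and live in $K^\times$ (they need not lie in $F$ at all, so "the $t_i$ are independent in $F^\times/(F^\times)^2$" does not even parse in the Hilbert setting), whereas $\alpha(\sigma_i)^2$ lies in $F^\times$; these are the two \emph{independent} pieces of data whose pairing $(t_i,\alpha(\sigma_i)^2)$ is the whole content of Quer's formula. Already for $K=\QQ$ the claim fails: with trivial nebentypus $E$ is totally real, while an inner-twist character may be odd (the classical level $63$ example, with $E=\QQ(\sqrt3)$, $F=\QQ$, twist by the quadratic character of conductor $3$, so $N=\QQ(\sqrt{-3})$ and $t_1=-3$), and then $F(\sqrt{t_1})$ is imaginary and cannot equal $E$. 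So your proof of $|\mathrm{Gal}(E/F)|=2^m$, and with it the surjectivity, collapses at this point. A secondary, smaller gap: identifying $\phi$ with $\chi_\sigma$ "by Chebotarev" requires knowing that the primes with $a_\fp\neq 0$ have density $1$ (the Sato--Tate input the paper cites); they form a density-one set, not a cofinite one, and agreement on a set of Frobenii of unknown density would not suffice.

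The step can be repaired, and then your route becomes a genuine alternative to the paper's. Replace $t_i$ by $u_i:=\alpha(\sigma_i)^2\in F^\times$: the expansion $\alpha(\tau)=\lambda\prod_i\alpha(\sigma_i)^{x_i(\tau)}$ with $\lambda\in F^\times$ (as in Lemma \ref{HilbertSym}), combined with $a_\fp\equiv\alpha(\mathrm{Frob}_\fp)\pmod{F^\times}$ from Theorem \ref{alpha2trivial} and the fact that the nonzero $a_\fp$ generate $E$ over $F$, gives $E=F(\alpha(\sigma_1),\dots,\alpha(\sigma_m))=F(\sqrt{u_1},\dots,\sqrt{u_m})$, hence $[E:F]\le 2^m$; and since $\alpha^2$ is injective on $\mathrm{Gal}(N/K)$, the classes of the $u_i$ are independent in $F^\times/(F^\times)^2$, giving $[E:F]=2^m$ by Kummer theory. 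This is a field-theoretic substitute for the paper's argument, which instead compares $N$ with the fixed field of all the $\chi_\sigma$ via Chebotarev to get $n=m$. With that substitution (and the density-one citation), your injection-plus-counting scheme does yield the lemma.
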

\begin{proof}
    First, we prove that all $\chi_{\sigma}$'s
    are trivial of $G_N=\mathrm{ker}(\alpha^2)$. 
    The Sato-Tate conjecture for Hilbert modular
    forms is known by \cite{barnet2011sato}. This implies that
    the set of prime ideal $\mathfrak{p}$
    of $O_K$ for which $a_{\mathfrak{p}} \neq 0$
    has density 1. Then by
    Chebotarev's density theorem 
    the Frobenius elements of these primes
    are dense in $G_K$, therefore it's
    enough to check that $\chi_{\sigma}$
    is trivial on the
    elements of the
    form $\mathrm{Frob}_{\mathfrak{p}} \in G_K$ that are
    in the kernel of $\alpha^2$ and $a_{\mathfrak{p}} \neq 0$. 
    
    Now if $a_{\mathfrak{p}} \neq 0$ then by
    theorem \ref{alpha2trivial},
    $\alpha^2(\mathrm{Frob}_{\mathfrak{p}})
    \equiv a_{\mathfrak{p}}^2$ modulo $F^{\times 2}$. Hence, if 
    $\mathrm{Frob}_{\mathfrak{p}} \in \mathrm{ker}(\alpha^2)$ then $a_{\mathfrak{p}} \in F$. This implies
    that $\chi_{\sigma} (\mathrm{Frob}_{\mathfrak{p}})=1$ by the
    definition of an inner-twist. So we
    are done.

    To prove that these are all such characters
    it's enough to prove that the number of
    character factoring through 
    $\mathrm{Gal}(N/K)$ is equal to the number
    of the inner-twists. 
    The group of
    character factoring through 
    $\mathrm{Gal}(N/K)$
    is the dual group of $\mathrm{Gal}(N/K)$
    and since this is abelian it has exactly
    $\frac{1}{2^m}$ elements. 
    Then by Chebotarev's density theorem the
    density of primes $\mathfrak{p}$ (with $a_{\mathfrak{p}} \neq 0$) that 
    $\mathrm{Frob}_{\mathfrak{p}} \in G_N$
    or equivalently $a_{\mathfrak{p}} \in F^{\times}$
    is $\frac{1}{2^m}$.

    Now, notice that if $(\sigma, \chi_{\sigma})$
    is an inner-twist then by definition
    $\chi_{\sigma}(\mathrm{Frob}_{\mathfrak{p}}) \cdot a_{\mathfrak{p}} = 
    \sigma(a_{\mathfrak{p}})$. So all
    $\chi_{\sigma}$'s are trivial on $\mathrm{Frob}_{\mathfrak{p}}$ if and
    only if $a_{\mathfrak{p}} \in F$. 
    Also, since $a_{\mathfrak{p}}^2 \in F$ for 
    all $\mathfrak{p}$, $\chi_{\sigma}^2 =1$.
    By \cite[Proposition B.3.3]{nekovavr2012level} $\Gamma$ is a finite 2-torsion
    abelian group. Hence,
    $\Gamma \simeq (\ZZ/ 2\ZZ)^n$ for some $n$.
    Clearly, $n \leq m$ since $\chi_{\sigma}$'s
    factor through $\mathrm{Gal}(N/K)$. 
    Now choose an $\FF_2$ basis $\sigma^{(1)},\cdots,\sigma^{(n)}$ for $\Gamma=\mathrm{Gal}(E/F)$. Let $G_M$ be the intersection
    of kernel of all $\chi_{\sigma}$'s
    which is equal to the intersection of the
    kernel of all $\chi_{\sigma^{(i)}}$'s.
    Now by Chebotarev's density theorem
    $M=N$ because they contain the same 
    Frobenius elements of $G_K$. Since $G_N$
    is the intersection of kernel of 
    $\chi_{\sigma^{(i)}}$'s which are all of order 2, one deduces that $n \geq m$.
    This implies $n=m$ and we are done.

\end{proof}

By the last lemma, the group of characters
$\chi_{\sigma}$ is the dual group of
$\mathrm{Gal}(N/K) \simeq (\ZZ/2\ZZ)^m$. 
Recall that $\{\sigma \}_{i=1}^n$ is an
$\FF_2$ basis for $\mathrm{Gal}(N/K)$
satisfying
$
\sigma_i (\sqrt{t_j}) = (-1)^{\delta_{i,j}}\sqrt{t_j}
$ where $N=K(\sqrt{t_1},...,\sqrt{t_m})$.
Now let $\sigma^{(1)},\cdots,\sigma^{(m)}$
be a dual basis for this (so each
$\sigma^{(i)}$ appear in an inner-twist), i.e.
$$
\sigma^{(j)}(\sigma_i)=(-1)^{\delta_{i,j}}.
$$

Notice that the fixed field of $\mathrm{ker}(\sigma^{(j)})$ is just $K(\sqrt{t_j})$.

Recall that we need to apply the corestriction map to get 
back over $F$ and find a formula for $[X]$ in
$\mathrm{Br}(F)$. The following well-known lemma
helps us
to do that.
\begin{lemma}[\cite{serre2013local}, Exercise XIV.2.4]
    Let $L/F$ be a finite separable extension
    and let $\mathrm{cor}: \mathrm{Br}(L)
    \rightarrow \mathrm{Br}(F)$ be the 
    corestriction map. Then for any
    $a\in L^{\times}$ and $b \in F^{\times}$
    one has
    $$
    \mathrm{cor}(a,b)_L = (N_{L/F}(a),b)_F
    $$
\end{lemma}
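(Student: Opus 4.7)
The plan is to deduce this from the projection formula in Galois cohomology together with Kummer theory. Recall that for any field $K$ of characteristic different from $2$, Kummer theory gives an isomorphism $\delta_K : K^\times/(K^\times)^2 \xrightarrow{\sim} H^1(G_K, \mu_2)$, and the Hilbert symbol factors as a cup product
$$
(a,b)_K = \delta_K(a) \cup \delta_K(b) \in H^2(G_K, \mu_2) \hookrightarrow \mathrm{Br}(K)[2].
$$

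First I would write $(a,b)_L = \delta_L(a) \cup \delta_L(b)$. Since $b \in F^\times$, the Kummer class $\delta_L(b)$ is obtained by restricting $\delta_F(b)$, i.e.\ $\delta_L(b) = \res(\delta_F(b))$. The projection formula (Frobenius reciprocity) for the cup product then gives
$$
\cores\bigl(\delta_L(a) \cup \res(\delta_F(b))\bigr)
= \cores(\delta_L(a)) \cup \delta_F(b),
$$
so everything reduces to computing $\cores(\delta_L(a)) \in H^1(G_F, \mu_2)$.

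Next I would verify that under the Kummer isomorphism the corestriction $H^1(G_L, \mu_2) \to H^1(G_F, \mu_2)$ coincides with the norm map $L^\times/(L^\times)^2 \to F^\times/(F^\times)^2$. This can be seen either via Shapiro's lemma (corestriction on $H^0$ of an induced module is the trace/norm, and this propagates through the Kummer connecting map), or by a direct cocycle computation using a choice of coset representatives for $G_L \backslash G_F$ together with the fact that $N_{L/F}(a) = \prod_\sigma \sigma(a)$. Combining both steps,
$$
\cores\bigl((a,b)_L\bigr) = \delta_F(N_{L/F}(a)) \cup \delta_F(b) = (N_{L/F}(a), b)_F.
$$

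The only real obstacle is keeping the sign and normalisation conventions for the cup product, the Kummer map, and the corestriction mutually consistent; once these are fixed, both the projection formula and the Kummer-theoretic description of the norm are entirely standard, so I would in fact simply cite Exercise XIV.2.4 of \cite{serre2013local} and omit the cocycle chase from the paper.
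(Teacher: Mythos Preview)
Your proof sketch via Kummer theory and the projection formula is correct and standard. The paper itself gives no proof of this lemma at all: it is stated with the citation to Serre's exercise in the header and used as a black box, which is exactly what you propose doing in your final sentence.
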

Now we can finally state and prove our main theorem. Note that for any finite place $\fp$ away from $N$ one has $a_\fp^2 \in F$ by proposition \ref{CLM}.
\begin{theo}
    Let $\mathfrak{p}_1,\cdots,\mathfrak{p}_m$
    be a set of prime ideals of $O_K$ not dividing $N$ and with 
    $a_{\mathfrak{p}_i} \neq 0$ such that
    $\sigma_i = \mathrm{Frob}_{\mathfrak{p}_i}$
    in $\mathrm{Gal}(N/K)$ (such primes
    exist by Chebotarev's theorem). Then
    In $\mathrm{Br}(F)$ one has:
    $$
    [X]
    = (N_{FK/F}(t_1),a_{\mathfrak{p}_1}^2)
    (N_{FK/F}(t_2),a_{\mathfrak{p}_2}^2)
    \cdots 
    (N_{FK/F}(t_m),a_{\mathfrak{p}_m}^2)
    $$
    where $(a,b)=(a,b)_{F}$ denotes the Hilbert symbol. 
\end{theo}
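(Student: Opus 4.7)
The plan is to bolt together the three ingredients already in place: Corollary \ref{corRes} (which expresses $[X]$ as a corestricted, restricted, inflated class coming from $[c_\alpha]$), Lemma \ref{HilbertSym} (which rewrites that class as an explicit product of Hilbert symbols over $FK$), and the corestriction formula for Hilbert symbols stated in the last lemma (which converts the first argument via a norm). The only nontrivial thing to verify is that the rewriting is compatible with the ambiguities present in the definition of $\alpha$ and with the hypotheses of Serre's formula.

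First I would observe that by Corollary \ref{corRes} and Lemma \ref{HilbertSym},
$$
[X] \;=\; \cores_{FK/F}\!\bigl(\iota_{*}(\res([c_\alpha]))\bigr)
\;=\; \cores_{FK/F}\!\left(\prod_{i=1}^{m}(t_i,\alpha(\sigma_i)^2)_{FK}\right),
$$
and since corestriction is a group homomorphism on Brauer groups, this reduces the theorem to computing $\cores_{FK/F}(t_i,\alpha(\sigma_i)^2)_{FK}$ for each $i$.

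Next I would replace $\alpha(\sigma_i)^2$ by $a_{\mathfrak{p}_i}^2$ inside the Hilbert symbol. By Theorem \ref{alpha2trivial} (together with the standing assumption $\epsilon=1$), the map $\alpha^2$ descends to a well-defined homomorphism $G_K \to F^{\times}/F^{\times 2}$, and for any prime $\mathfrak{p}$ of good reduction with $a_{\mathfrak{p}}\neq 0$ one has $\alpha(\mathrm{Frob}_{\mathfrak{p}})^2 \equiv a_{\mathfrak{p}}^2 \pmod{F^{\times 2}}$. By the choice of $\mathfrak{p}_i$, the image of $\mathrm{Frob}_{\mathfrak{p}_i}$ in $\mathrm{Gal}(N/K)=G_K/\ker(\alpha^2)$ equals $\sigma_i$, so $\alpha(\sigma_i)^2 \equiv a_{\mathfrak{p}_i}^2 \pmod{F^{\times 2}}$, and a fortiori modulo $(FK)^{\times 2}$. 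Bilinearity of the Hilbert symbol modulo squares then yields $(t_i,\alpha(\sigma_i)^2)_{FK}=(t_i,a_{\mathfrak{p}_i}^2)_{FK}$.

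Finally, I would apply the corestriction–Hilbert symbol identity. By Proposition \ref{CLM} we have $a_{\mathfrak{p}_i}^2 \in F^{\times}$, so the hypothesis of the last lemma is satisfied with $L=FK$, $a=t_i \in (FK)^{\times}$ and $b=a_{\mathfrak{p}_i}^2 \in F^{\times}$, giving
$$
\cores_{FK/F}\bigl((t_i,a_{\mathfrak{p}_i}^2)_{FK}\bigr)
\;=\;
(N_{FK/F}(t_i),\,a_{\mathfrak{p}_i}^2)_F.
$$
Taking the product over $i$ produces the advertised formula. There is no serious obstacle: the only points to watch are that $\alpha$ is only defined up to $F^{\times}$ (handled by passing to $\alpha^2 \bmod F^{\times 2}$ and the invariance of the Hilbert symbol under squares) and that the second argument of each Hilbert symbol must genuinely lie in $F^{\times}$ for the norm formula to apply (supplied by Proposition \ref{CLM}). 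All the hard work — the cohomological rewriting of $[X]$ and the construction of the Hilbert-symbol presentation — is already done in the preceding sections, so this last step is essentially bookkeeping.
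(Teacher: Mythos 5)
Your proposal is correct and follows essentially the same route as the paper: combine Corollary \ref{corRes} with Lemma \ref{HilbertSym}, replace $\alpha(\sigma_i)^2$ by $a_{\mathfrak{p}_i}^2$ modulo squares using Theorem \ref{alpha2trivial}, and apply the corestriction formula for Hilbert symbols with $a_{\mathfrak{p}_i}^2\in F^{\times}$ (Proposition \ref{CLM}). Your extra attention to why the second argument lies in $F^{\times}$ and to the $F^{\times}$-ambiguity in $\alpha$ is a welcome, if minor, elaboration of the same argument.
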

\begin{proof}
    Using lemma \ref{HilbertSym} one only
    needs to notice that
    $\alpha(\mathrm{Frob}_{\mathfrak{p}_i})^2
    \equiv a_{\mathfrak{p}_i}^2$ modulo
    $F^{\times 2}$, so they only differ by 
    a square which doesn't affect the Hilbert
    symbol. Therefore:
    $$
    \iota_{*} (\mathrm{res}([c_{\alpha}]))
    = (t_1,a_{\mathfrak{p}_1}^2)
    (t_2,a_{\mathfrak{p}_2}^2)
    \cdots 
    (t_m,a_{\mathfrak{p}_m}^2)
    $$
    Now one applies to corestriction map
    to both sides. 
    The left hand sides gives us
    $[X]$ by corollary \ref{corRes} and the
    right hand side give us 
    $$(N_{FK/F}(t_1),a_{\mathfrak{p}_1}^2)
    (N_{FK/F}(t_2),a_{\mathfrak{p}_2}^2)
    \cdots 
    (N_{FK/F}(t_m),a_{\mathfrak{p}_m}^2)$$
    by the previous lemma,
    since $a_{\mathfrak{p}_m}^2 \in F^{\times}$.
    This proves the statement of the theorem.
\end{proof}

\bibliographystyle{amsplain}
\bibliography{refs}

\end{document}